\numberwithin{equation}{section}
\newtheoremstyle{Teorema}
{3pt}
{3pt}
{\slshape}
{}
{\bfseries}
{:}
{\newline}
{}
\newtheorem{theorem}{Theorem}[section]
\newtheorem{definition}{Definition}
\newtheorem{corollary}[theorem]{Corollary}
\newtheorem{lemma}[theorem]{Lemma}
\theoremstyle{definition}
\DeclareMathOperator{\lcm}{lcm}
\DeclareMathOperator{\D}{d}
\DeclareMathOperator{\supp}{supp}
\DeclareMathOperator{\Co}{Co}
\newcommand{\N}{\mathbb{N}}
\newcommand{\Z}{\mathbb{Z}}
\newcommand{\R}{\mathbb{R}}
\newcommand{\C}{\mathbb{C}}
\newcommand{\diff}{\backslash}
\title{On the convergence of multi-level Hermite-Pad\'e approximants for a class of meromorphic functions}
\author{L. G. Gonz\'alez Ricardo, G. L\'opez Lagomasino,\\ and S. Medina Peralta}
\date{\today}
\begin{document}

\maketitle
\begin{abstract}

	The present paper deals with the convergence properties of multi-level Hermite-Pad\'e approximants for a class of meromorphic functions given by rational perturbations with real coefficients of a Nikishin system of functions, and study the zero location of the corresponding multiple orthogonal polynomials.
\end{abstract}

\noindent \textbf{Keywords:} Nikishin system, multiple orthogonal polynomials, Hermite-Pad\'e approximation

\noindent \textbf{AMS subject classification:} Primary: 30E10, 41A21; Secondary: 42C05

\section{Introduction}

This paper deals with the proof of Markov \cite{markov} and Stieltjes \cite{stieltjes} type theorems for the convergence of Hermite-Pad\'e approximants of certain type meromorphic functions. In the context of Pad\'e approximants this study was initiated in \cite{gonchar1} (see also \cite{lago81}, \cite{Lago89} and references therein).

Hermite-Pad\'e aproximation and their associated multiple orthogonal polynomials have received much attention in recent years. This is partly due to the many areas in which they have been found to be useful. Such areas cover, but are not limited to, number theory,  non-intersecting brownian motions, multiple orthogonal polynomial ensembles, random matrix theory, and differential equations. For specific references see \cite{Lago_Sergio_Jacek}. In particular, in \cite{LS} they were used to find discrete (peakon) solutions of the Degasperis-Procesi partial differential equation (see also references therein). The problem was reduced to finding a pair of discrete measures whose mass points and corresponding masses fully characterize the peakon solutions. An appropriate interpolation problem was defined to find the Cauchy transforms of such measures ant thus the measures themselves. We wish to point out that the pair of measures appearing in the problem form a generator of a Nikishin system (see definition in next subsection). For more details see \cite{LS} and \cite[Appendix]{Lago_Sergio_Jacek}.

Motivated by \cite{LS}, in \cite{Lago_Sergio_Jacek} we studied (and proved) the convergence of such interpolation processes for the case of general Nikishin systems whose generating measures were continuous. Here, we wish to see what happens when the Nikishin system is perturbed with rational functions with real coefficients. A similar question was raised and solved in \cite{Lago_Sergio1} for type I Hermite-Pad\'e approximation (see also \cite{Ulises_Lago_Sergio1}).

\subsection{Nikishin systems}

Nikishin systems were first introduced in \cite{nikishin} and were initially named MT-systems. Here, we will slightly extend the definition to include measures with unbounded support.

In the sequel, we will only consider Borel measures $s$ with constant sign, finite moments $c_n = \int x^n \D s (x), |c_n| < \infty$, $n\in\Z_+$, whose support $\supp s$ consists of infinitely many points, and is contained in $\R$. We will denote by $\Delta$ the smallest interval  containing $\supp s$, i.e. its convex hull. This class of these measures will be denoted by $\mathcal{M}(\Delta)$. Let
$$\widehat{s}(z) = \int \frac{\D s(x)}{z-x}$$
denote the Cauchy transform of the measure $s$. Obviously, $\widehat{s}$ is holomorphic in $\C\diff\Delta$ and we can associate to $\widehat{s}$ its formal Taylor expansion at infinity
$$\widehat{s}(z)\sim \sum_{j=0}^\infty\frac{c_j}{z^{j+1}},\qquad c_j = \int x^j\D s(x).$$
We say that the measure $s$ satisfies Carleman's condition when
\[ \sum_{n=0}^\infty \frac{1}{|c_n|^{1/2n}} = \infty.
\]

Let $\Delta_\alpha$, $\Delta_\beta$ be two intervals contained in the real line such that $\Delta_\alpha\cap\Delta_\beta=\emptyset$. Consider the measures $\sigma_\alpha\in \mathcal{M}(\Delta_\alpha), \sigma_\beta\in\mathcal{M}(\Delta_\beta), \widehat{\sigma}_\beta \in L_1(\sigma_\alpha)$.   With these two measures  we construct a third one as follows (using differential notation)
$$\D\langle \sigma_\alpha,\sigma_\beta\rangle(x): = \widehat{\sigma}_\beta(x)\D\sigma_\alpha(x).$$

When we consider consecutive products of measures, a.e. $\langle \sigma_\alpha,\sigma_\beta, \sigma_\gamma \rangle := \langle\sigma_\alpha, \langle \sigma_\beta, \sigma_\gamma \rangle \rangle$ we implicitly assume not only that $\widehat{\sigma}_\gamma \in L_1(\sigma_\beta)$, but also $\langle \sigma_\beta, \sigma_\gamma\widehat{\rangle}\in L_1(\sigma_\alpha)$, where $\langle \sigma_\beta, \sigma_\gamma\widehat{\rangle}$ denotes the Cauchy transform of $\langle \sigma_\beta, \sigma_\gamma\rangle$. It is important to remark that this product is neither commutative nor associative.

\begin{definition}
\label{Nikishin_sys}
Take a collection $\Delta_j$, $j=1,\ldots,m$ of intervals such that
$$\Delta_j\cap\Delta_{j+1}=\emptyset,\qquad j=1,\ldots,m-1.$$
Let $(\sigma_1,\ldots,\sigma_m)$ be a system of measures such that $\Co(\supp \sigma_j)=\Delta_j$, $\sigma_j\in\mathcal{M}(\Delta_j)$, $j=1,\ldots,m$. We say that $(s_{1,1},\ldots,s_{1,m})=\mathcal{N}(\sigma_1,\ldots,\sigma_m)$, where
$$s_{1,1}=\sigma_1,\quad s_{1,2}=\langle\sigma_1,\sigma_2 \rangle,\; \ldots, \quad s_{1,m}=\langle \sigma_1, \langle \sigma_2,\ldots,\sigma_m \rangle\rangle,$$
is the Nikishin system of measures generated by $(\sigma_1,\ldots,\sigma_m)$. The vector $(\widehat{s}_{1,1},\ldots,\widehat{s}_{1,m})$ is called a Nikishin system of functions.
\end{definition}

Notice that any sub-system of $(\sigma_1,\ldots,\sigma_m)$ of consecutive measures is also a generator of some Nikishin system. In the sequel for $1\leq j\leq k\leq m$ we will write
$$s_{j,k} := \langle \sigma_j, \sigma_{j+1},\ldots, \sigma_k \rangle,\qquad s_{k,j} := \langle \sigma_k, \sigma_{k-1},\ldots, \sigma_j \rangle.$$
In particular, with the system of measures $(\sigma_1,\ldots,\sigma_m)$ we can also define the reversed Nikishin system $(s_{m,m}, \ldots, s_{m,1}) = \mathcal{N}(\sigma_m,\ldots,\sigma_1)$ which plays a significant role in the sequel.

\subsection{Statement of the main result}

Let us start defining the approximation objects.
\begin{definition}
Consider the Nikishin system $\mathcal{N}(\sigma_1,\ldots,\sigma_m)$. Let $\displaystyle r_j = \frac{v_j}{t_j}$, $k=1,\ldots,m$, be rational fractions with real coefficients, $\deg v_k<\deg t_k=d_k$, $(v_k,t_k)=1$ (coprime) for all $k=1,\ldots,m$. For each $n\in\N$, there exist polynomials $a_{n,0}, a_{n,1},\ldots,a_{n,m}$, with $\deg a_{n,j}\leq n-1$, $j=0,1,\ldots,m-1, \deg a_{n,m}\leq n$, not all identically equal to zero, called multi-level (ML) Hermite-Pad\'e polynomials that verify
\begin{align}
    \mathcal{A}_{n,0} :=& \left[ a_{n,0} + \sum_{k=1}^m (-1)^k a_{n,k}(\widehat{s}_{1,k}+r_k) \right] \in \mathcal{O}\left(\frac{1}{z^{n+1}}\right), \label{Problem1}\\
    \mathcal{A}_{n,j} :=& \left[(-1)^j a_{n,j} + \sum_{k=j+1}^m (-1)^k a_{n,k}\widehat{s}_{j+1,k}\right] \in \mathcal{O}\left(\frac{1}{z}\right), \quad j=1,\ldots,m-1.\label{Problem2}
\end{align}
Here and in the sequel $\mathcal{O}(\cdot)$ is as $z \to \infty$ along paths non tangential to the support of the measures involved. For completeness we denote $\mathcal{A}_{n,m} := (-1)^m a_{n,m}$.
\end{definition}

When $r_k \equiv 0, k=1,\ldots,m,$ this construction was introduced in \cite{Lago_Sergio_Jacek}.
Notice that in this scheme of approximation the interpolation conditions involve all Nikishin systems of the ``inner levels'', i.e. $\mathcal{N}(\sigma_1,\ldots,\sigma_m)$, $\mathcal{N}(\sigma_2,\ldots,\sigma_m)$, \ldots, $\mathcal{N}(\sigma_m) = (s_{m,m})$. Finding the polynomials $a_{n,0}, a_{n,1},\ldots,a_{n,m}$ is equivalent to solving a homogeneous linear system of $n(m+1)$ equations, given by the interpolation conditions, on $n(m+1)+1$ unknowns, corresponding to the coefficients of the polynomials. Consequently, the system of equations has a non trivial solution. However, the solution need not be unique.

Let $T = \lcm (t_1,\ldots,t_m), \deg T = D$,   where $\lcm$ stands for least common multiple.

\begin{theorem}
\label{Th_conv_unif}
For each $n\in\N$ let $a_{n,0}, a_{n,1},\ldots, a_{n,m}$ be Hermite-Pad\'e polynomials associated with the Nikishin system $\mathcal{N}(\sigma_1,\ldots,\sigma_m)$ and $(r_1,\ldots,r_m)$ such that (\ref{Problem1}) and (\ref{Problem2}) holds. Assume that the zeros of the polynomial $T$ lie in the complement of $\Delta_1 \cup \Delta_m$ and $f$ has exactly $D$ poles in $\C \setminus \Delta_m$, where
\[ f := \widehat{s}_{m,1} - \sum_{k=1}^{m-1} (-1)^{k}\widehat{s}_{m,k+1}r_k - (-1)^m r_m.
\]
Suppose that either the sequence of moments of $\sigma_m$ satisfies Carleman's conditions or $\Delta_{m-1}$ is a bounded interval. Then,
\begin{equation*}
\lim_n\frac{a_{n,j}}{a_{n,m}} = \widehat{s}_{m,j+1}, \qquad j=1,\ldots,m-1,
\end{equation*}
and
\begin{equation} \label{polos}
  \lim_n \frac{a_{n,0}}{a_{n,m}} = f(z),
\end{equation}
uniformly on each compact subset of $\C\diff(\Delta_m \cup \{z:T(z) = 0\})$. For all sufficiently large $n$, $\deg a_{n,m} = n,$  $a_{n,m}$ has exactly $n-D$ simple zeros in the interior of $\Delta_m$ and $D$ zeros in $\C \setminus \Delta_m$ which converge to the poles of $f$ in this region according to their order. For $j=1,\ldots,m-1$ and all sufficiently large $n$ the polynomial $a_{n,j}$ has at least $n-D -m +j$ sign changes in $\Delta_m$ and at least $D$ zeros in $\C \setminus \Delta_m$ of which $D$ converge to the zeros of $T$ according to their multiplicity and the remaining ones accumulate on $\Delta_m \cup \{\infty\}$.
\end{theorem}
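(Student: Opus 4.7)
The plan is to reduce the perturbed problem to the unperturbed multi-level Hermite-Padé problem for the \emph{reversed} Nikishin system, for which the convergence machinery of \cite{Lago_Sergio_Jacek} applies, and then to exploit the disjointness hypothesis on the zeros of $T$ to pin down the extra $D$ zeros.

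\textbf{Step 1 (Orthogonality).} I would first multiply each linear form in \eqref{Problem1}--\eqref{Problem2} by $T$ to clear the denominators of the perturbations $r_k$. Combined with $T$-multiples of suitable polynomials of degree $\leq n-1-D$, the decay conditions $\mathcal{A}_{n,j} \in \mathcal{O}(1/z)$ translate, via the standard chain integration that uses $\D s_{1,k} = \widehat{s}_{2,k}\,\D\sigma_1$ telescopically, into full multiple orthogonality relations of $a_{n,m}$ against polynomials of degree $\leq n-1-D$ with respect to the reversed Nikishin system $(s_{m,m},s_{m,m-1},\ldots,s_{m,1})$ on $\Delta_m$. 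The function $f$ defined in the statement is precisely the Cauchy-type function that plays, for the reversed system with perturbation, the role $\widehat{s}_{m,1}$ plays in the unperturbed reversed setting; the coprimeness $(v_k,t_k)=1$ and disjointness of the zeros of $T$ from $\Delta_1\cup\Delta_m$ ensure the transformation is faithful (no cancellation in the $D$ interpolation conditions absorbed by $T$).

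\textbf{Step 2 (Zero location).} Because $(s_{m,m},\ldots,s_{m,1})$ is an AT-system on $\Delta_m$, the orthogonality from Step 1 forces $a_{n,m}$ to have at least $n-D$ sign changes in the interior of $\Delta_m$. Factor $a_{n,m} = Q_n\, p_n$, where $Q_n$ collects the zeros lying in $\C\setminus\Delta_m$; then $\deg Q_n \leq D$ and, eventually, $\deg a_{n,m}=n$. For the inner polynomials, I would descend level by level using \eqref{Problem2}: knowing the location of zeros of $a_{n,k}$ for $k>j$, an argument by contradiction (inserting a sign-compensating polynomial into the relation defining $\mathcal{A}_{n,j}$ and integrating against $s_{j+1,m}$ or its reversed analog) yields at least $n-D-m+j$ sign changes of $a_{n,j}$ on $\Delta_m$ and $\geq D$ additional zeros outside.

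\textbf{Step 3 (Ratio convergence and attraction).} Using the $\mathcal{O}(1/z)$ conditions, express $a_{n,j}/a_{n,m}+(-1)^{j+1}\widehat{s}_{m,j+1}$ and $a_{n,0}/a_{n,m}-f$ as residue/Cauchy integrals against the reversed Nikishin measures weighted by $T/Q_n$. Normality in $\C\setminus(\Delta_m \cup \{T=0\})$ is immediate; by a Markov-type argument using the orthogonality, every subsequential limit of $a_{n,0}/a_{n,m}$ must agree with $f$ off $\Delta_m$. The Carleman condition on $\sigma_m$ (or boundedness of $\Delta_{m-1}$) secures determinacy of the underlying moment problem and thus uniqueness of the limit. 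Once the limit is identified as $f$, the $D$ zeros of $Q_n$ must converge to the poles of $f$ in $\C\setminus\Delta_m$ counted with multiplicity, because any deficit would produce a pole or a missed pole in the limit ratio; likewise the spurious zeros of the $a_{n,j}$ outside $\Delta_m$ are attracted to the zeros of $T$, since after multiplication by $T$ the corresponding linear form becomes regular at $\{T=0\}$.

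\textbf{Main obstacle.} The critical and most delicate step is Step 1: identifying the precise orthogonality that $a_{n,m}$ inherits in the presence of all $m$ rational perturbations simultaneously. The interference between the $r_k$'s and the telescoping of the Nikishin chain must be disentangled so that exactly $D$ orthogonality conditions are spent to accommodate the poles of $f$ and no more. The hypothesis that the zeros of $T$ avoid $\Delta_1\cup\Delta_m$ is exactly what prevents interaction between the perturbation poles and the outer generating measures, and this is where the argument would break down in its absence.
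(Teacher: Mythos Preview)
Your outline departs substantially from the paper's argument, and two of your three steps contain real gaps.

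\textbf{Step 1 is not what actually happens.} You assert that, after clearing denominators by $T$, the multi-level conditions \eqref{Problem1}--\eqref{Problem2} yield ``full multiple orthogonality relations of $a_{n,m}$ against polynomials of degree $\leq n-1-D$ with respect to the reversed Nikishin system $(s_{m,m},\ldots,s_{m,1})$''. The paper never establishes such a statement, and the conditions \eqref{Problem2} alone are only $\mathcal{O}(1/z)$ at each inner level, so no orthogonality of any fixed positive degree comes from them directly. What the paper does instead (Lemma~\ref{Zeros_a_nm} and the proof of Theorem~\ref{Th_conv_Haus}) is an iterative descent: apply Lemma~\ref{Remainder_type_i} to $T\mathcal{A}_{n,0}$ to get $n-D$ sign changes of $\mathcal{A}_{n,1}$ on $\Delta_1$; encode those sign changes in a polynomial $w_{n,1}$; then $\mathcal{A}_{n,1}/w_{n,1}$ has the decay needed to apply Lemma~\ref{Remainder_type_i} again at level $2$; and so on down to $a_{n,m}$. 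The orthogonality obtained at each step is with respect to the \emph{varying} measure $\D\sigma_{j+1}/w_{n,j}$, not a fixed reversed Nikishin measure. A parallel reduction, using \eqref{CauchyTranInver}--\eqref{QuotCauchyTran} to peel off Cauchy transforms, eventually produces the incomplete multi-point Pad\'e relation \eqref{sign_changes}. There is no AT-system shortcut here.

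\textbf{Step 3 has a circularity problem.} You claim ``normality in $\C\setminus(\Delta_m\cup\{T=0\})$ is immediate''. It is not: the $\leq D$ zeros of $a_{n,m}$ outside $\Delta_m$ are a~priori free, and until you know they converge to the zeros of $T$ you cannot exclude that the ratios $a_{n,j}/a_{n,m}$ develop poles wandering through any given compact set. The paper breaks this circularity by first proving convergence only in \emph{Hausdorff content} (Theorem~\ref{Th_conv_Haus}), a mode of convergence insensitive to finitely many moving poles; this follows from Lemma~\ref{Conv_MP_Incom_Pade} applied to the incomplete Pad\'e relations \eqref{sign_changes}. Only then does Gonchar's lemma \cite[Lemma~1]{gonchar} enter: since $f$ has exactly $D$ poles and the approximants have at most $D$ poles in $\C\setminus\Delta_m$, Hausdorff convergence upgrades to uniform convergence on compacta away from those poles, and simultaneously forces the $D$ free zeros of $a_{n,m}$ to converge to the poles of $f$. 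Your Markov-type argument would need an independent control on the location of those zeros before you can even set up the residue integrals, and you do not supply one.

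Your Step~2 sign-change count for $a_{n,j}$ is in spirit correct and close to what the paper does (see the end of the proof of Theorem~\ref{Th_conv_Haus}), but it too relies on the reduction machinery of Step~1, not on an AT-system argument.
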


The fact that $\deg a_{n,m} = n$ for all sufficiently large $n$ implies that for such indices the vector $(a_{n,0},\ldots,a_{n,m})$ is uniquely determined except for a constant multiple of it. Indeed, from two non-collinear  solutions of \eqref{Problem1}-\eqref{Problem2} one can construct a non-trivial solution whose last polynomial has degree smaller that $n$.

Notice that $f(z) \equiv \widehat{s}_{m,1}$ when $r_k \equiv 0, k=1,\ldots,m,$  and Theorem \ref{Th_conv_unif} gives the main statement in  \cite{Lago_Sergio_Jacek}; namely, relation (1.23) of Theorem 1.6. The expression of the limit relations in Theorem \ref{Th_conv_unif}  are similar to those in \cite[Theorem 1.2]{Lago_Sergio1} where type I Hermite-Pad\'e approximants of meromorphic functions were studied.

Obviously, the poles of $f$ in $\C \setminus \Delta_m$ are the zeros of $T$. Therefore, the total number of poles of $f$ in that region equals $D$ if and only if for each zero $\zeta$ of $T$, say of multiplicity $\tau$, we have
\[ \lim_{z \to \zeta} (z-\zeta)^{\tau}f(z) =   - \sum_{k=1}^{m-1} (-1)^{k}\widehat{s}_{m,k+1}(\zeta) \lim_{z\to \zeta} (z-\zeta)^\tau r_k(z) - (-1)^m \lim_{z\to \zeta} (z-\zeta)^\tau r_m(z) \neq 0.
\]
Therefore, sufficient conditions for $f$ to have $D$ poles in $\C \setminus \Delta_m$ is that $(t_j,t_k) = 1, 1\leq j,k \leq m$ or, more generally, that for each $\zeta, T(\zeta) = 0,$ there is only one polynomial $t_k$ which has $\zeta$ as zero of degree $\tau$. Indeed,   in this case all the terms in the previous sum cancel except one which is trivially different from zero. (The functions $\widehat{s}_{m,j}, j=1,\ldots,m,$ are never zero in $\C \setminus \Delta_m$.)

\section{Auxiliary results and concepts}


In this section we introduce some necessary definitions and results needed in our developments. We start with a useful Lemma whose proof in the case of measures with bounded support is an easy consequence of Cauchy's integral formula and Fubini's theorem but in the unbounded case is more elaborate and may be found in \cite[Lemma 2.1]{Lago_Sergio}.

\begin{lemma}
\label{Remainder_type_i}
Let $(s_{1,1},\ldots,s_{1,m}) =\mathcal{N}(\sigma_1,\ldots,\sigma_m)$ be given. Assume that there exist polynomials with real coefficients $a_0,\ldots, a_m$ and a polynomial $w$ with real coefficients whose zeros lie in $\C\backslash\Delta_1$ such that
\begin{equation*}
    \frac{\mathcal{A}(z)}{w(z)}\in\mathcal{H}(\C\backslash\Delta_1) \;\textrm{ and }\; \frac{\mathcal{A}(z)}{w(z)}=\mathcal{O}\left(\frac{1}{z^N}\right),\quad z\rightarrow \infty,
\end{equation*}
where $\displaystyle \mathcal{A}:= a_0 +\sum_{k=1}^m a_k\widehat{s}_{1,k}$ and $N\geq 1$. Let $\displaystyle \mathcal{A}_1:= a_1 + \sum_{k=2}^m a_k\widehat{s}_{2,k}$. Then,
\begin{equation} \label{relint}
    \frac{\mathcal{A}(z)}{w(z)} = \int \frac{\mathcal{A}_1(x)}{z-x}\frac{\D \sigma_1(x)}{w(x)}.
\end{equation}
If $N\geq 2$, we also have
\begin{equation}
    \int x^\nu \mathcal{A}_1(x)\frac{\D \sigma_1(x)}{w(x)}=0,\qquad \nu=0,1,\ldots, N-2. \label{Orth_remainder}
\end{equation}
In particular, $\mathcal{A}_1$ has at least $N-1$ sign changes in $\mathring{\Delta}_1$ (the interior of $\Delta_1$ in $\R$ with the usual topology).
\end{lemma}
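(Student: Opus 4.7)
The plan is to prove the three assertions in order---integral representation \eqref{relint}, orthogonality \eqref{Orth_remainder}, and the sign-change count---handling the bounded-support case in detail while relying on \cite[Lemma 2.1]{Lago_Sergio} for the technical extension to unbounded $\Delta_1$.

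To establish \eqref{relint}, I would start from the observation that $F := \mathcal{A}/w$ is holomorphic on $\C \setminus \Delta_1$ with $F(z) = \mathcal{O}(z^{-N})$, $N \geq 1$. By Cauchy's theorem applied between $\Delta_1$ and a circle of radius $R$ (whose contribution vanishes as $R \to \infty$ thanks to $N \geq 1$),
\[ F(z) \;=\; \frac{1}{2\pi i}\oint_{\Gamma}\frac{F(\zeta)}{\zeta - z}\,d\zeta \]
for any positively oriented loop $\Gamma \subset \C \setminus \Delta_1$ enclosing $\Delta_1$ and leaving $z$ outside. Collapsing $\Gamma$ onto $\Delta_1$, only the jumps of the Cauchy transforms $\widehat{s}_{1,k}$ contribute, since the polynomials $a_j$ and $w$ are continuous across $\Delta_1$ and $w$ is nonvanishing there. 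Using $ds_{1,1} = d\sigma_1$ and $ds_{1,k} = \widehat{s}_{2,k}\,d\sigma_1$ for $k \geq 2$, the Sokhotski--Plemelj jump relation combines into
\[ \frac{1}{2\pi i}\bigl(F(x+i0) - F(x-i0)\bigr)\,dx \;=\; -\,\frac{\mathcal{A}_1(x)}{w(x)}\,d\sigma_1(x), \]
and substitution yields \eqref{relint}. A more elementary route in the bounded case is to write $a_k(z) = a_k(x) + (z-x)\tilde a_k(z,x)$, apply Fubini to each $a_k(z)\widehat{s}_{1,k}(z)$ to obtain $\mathcal{A}(z) = \int \mathcal{A}_1(x)/(z-x)\,d\sigma_1(x) + P(z)$ for some polynomial $P$, and then exploit that $\mathcal{A}/w$ is holomorphic at the zeros of $w$ to absorb $P(z)/w(z)$ into the integral by replacing the kernel $1/(w(z)(z-x))$ with $1/(w(x)(z-x))$.

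Granted \eqref{relint}, the orthogonality \eqref{Orth_remainder} follows from the geometric expansion of the Cauchy kernel: for $|z|$ large,
\[ \frac{\mathcal{A}(z)}{w(z)} \;=\; \sum_{\nu \geq 0}\frac{1}{z^{\nu+1}}\int x^\nu \mathcal{A}_1(x)\,\frac{d\sigma_1(x)}{w(x)}, \]
and matching against $F = \mathcal{O}(z^{-N})$ forces the first $N-1$ moments to vanish. For the sign-change count I would argue by contradiction in the classical way: if $\mathcal{A}_1$ had at most $N-2$ sign changes on $\mathring\Delta_1$, say at points $x_1 < \cdots < x_j$, then $q(x) := \prod_{i=1}^{j}(x - x_i)$ is a polynomial of degree $j \leq N-2$ for which $q\mathcal{A}_1$ has constant sign on $\Delta_1$; since $w$ has real coefficients and no real zeros in $\Delta_1$ it too has constant sign there, so $q\mathcal{A}_1\,d\sigma_1/w$ is a signed measure of constant sign and non-trivial mass, contradicting $\int q\mathcal{A}_1\,d\sigma_1/w = 0$ from \eqref{Orth_remainder}.

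The main obstacle, and the reason the unbounded case is delegated to \cite{Lago_Sergio}, is justifying the contour collapsing and the termwise kernel expansion when $\Delta_1$ is unbounded: the loop at infinity is no longer automatic, and the exchange of sum and integral has to be controlled using only the finite-moment assumption on $\sigma_1$. In the bounded case all these steps are routine, but in the unbounded case one must work with exhausting compact subintervals of $\Delta_1$ and pass to the limit carefully, exactly as carried out in the cited reference.
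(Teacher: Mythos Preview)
Your proposal is correct and matches the paper's treatment: the paper does not actually prove this lemma but only remarks that in the bounded-support case it is ``an easy consequence of Cauchy's integral formula and Fubini's theorem'' and defers the unbounded case to \cite[Lemma~2.1]{Lago_Sergio}. Your argument fleshes out precisely this sketch---contour integration/Fubini for \eqref{relint}, kernel expansion for \eqref{Orth_remainder}, the standard sign-change contradiction---and delegates the unbounded case to the same reference, so there is no substantive divergence.
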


In the following, we need some relations involving reciprocals and ratios of Cauchy transforms of measures. It is well known that for each measure $\sigma\in\mathcal{M}(\Delta)$, where $\Delta$ is contained in a half line (by a half line we mean an interval of the form $[c,+\infty)$ or $(-\infty,c], c \in \R$), there exist a measure $\tau\in\mathcal{M}(\Delta)$ and a polynomial $\ell(z) = az+b$, $a=1/|\sigma|,b\in\R$, such that
\begin{equation*}
    \frac{1}{\widehat{\sigma}(z)} = \ell(z) + \widehat{\tau}(z),
\end{equation*}
where $|\sigma|$ is the total variation of the measure $\sigma$. For more information in the case of measures with compact support see \cite[Appendix]{Krein_Nudelman} and \cite[Theorem 6.3.5]{Stahl_Totik}, when the measure is supported in a half line see \cite[Lemma 2.3]{ulises_lago_2}. If $\sigma$ satisfies Carleman's condition $\sum_{n=0}^{\infty}|c_n|^{-1/2n}=\infty$ then $\tau$ satisfies the same condition, \cite[Theorem 1.5]{Lago_Sergio}. We call $\tau$ the inverse measure of $\sigma$.

Such measures appear frequently in our arguments, so we will fix a notation to differentiate them. In relation with the measures denoted with $s$ they will carry over to them the corresponding sub-indices. The same goes for the polynomials $\ell$. For example,
\begin{equation}\label{CauchyTranInver}
    \frac{1}{\widehat{s}_{j,k}(z)} = \ell_{j,k}(z) + \widehat{\tau}_{j,k}(z).
\end{equation}
We also use
\begin{equation*}
    \frac{1}{\widehat{\sigma}_\alpha(z)} = \ell_\alpha(z) + \widehat{\tau}_\alpha(z).
\end{equation*}
On some occasions we write $\langle \sigma_\alpha,\sigma_\beta \widehat{\rangle}$ in place of $\widehat{s}_{\alpha,\beta}$. In the paper \cite[Lemma 2.10]{Ulises_Lago} (see also \cite{ulises_lago_2}) several formulas involving Cauchy transforms of measures were proved. For our reasonings, the most important ones establish that
\begin{equation}
    \frac{\widehat{s}_{1,k}}{\widehat{s}_{1,1}} = \frac{|s_{1,k}|}{|s_{1,1}|} - \langle \tau_{1,1}, \langle s_{2,k}, \sigma_1 \rangle \widehat{\rangle}, \label{QuotCauchyTran}
\end{equation}
where $|s|$ denotes the total variation of the measure $s$.

Another important notion for the proofs to come is the convergence in Hausdorff content. Let $A$ be a subset of $\C$. By $\mathcal{U}(A)$ we denote the class of all coverings of $A$ by at most a numerable set of disks. Set
\begin{equation*}
    h(A) = \inf\left\{ \sum_{i=1}^\infty |U_i| \mid \{U_i\}\in\mathcal{U}(A)\right\},
\end{equation*}
where $|U_i|$ stands for the radius of the disk $U_i$. The quantity $h(A)$ is called the 1-dimensional Hausdorff content of the set $A$.

Let $\{\phi_n\}_{n\in\N}$ be a sequence of complex functions defined on a region $D\subset\C$ and $\phi$ another function defined on $D$ (the value $\infty$ is permitted). We say that $\{\phi_n\}_{n\in\N}$ converges in Hausdorff content to the function $\phi$ inside $D$ if for each compact subset $K$ of $D$ and for each $\varepsilon>0$, we have
    \begin{equation*}
        \lim_{n\to \infty} h\{ z\in K : |\phi_n(z)-\phi(z)|>\varepsilon \} =0
    \end{equation*}
    (by convention $\infty\pm\infty=\infty$). We denote this writing $h-\lim_{n\to \infty} \phi_n=\phi $ inside $D$.

As usual, we denote the space of analytic functions over a region $\Omega$ in the complex plain by $\mathcal{H}(\Omega)$. In order to obtain the convergence of the approximants in Hausdorff content, we need the notion of incomplete multi-point Pad\'e approximant.
\begin{definition}
\label{Incomplete_Pade}
Let $s\in\mathcal{M}(\Delta)$ where $\Delta$ is contained in a half line of the real axis. Fix an arbitrary $\kappa\geq-1$. Consider a sequence of polynomials $\{\omega_n\}_{n\in\Lambda}$, $\Lambda\subset\Z_+$, such that $\deg \omega_n=\kappa_n\leq 2n+\kappa+1$, whose zeros lie in $\R\backslash\Delta$. Let $R_n=p_n/q_n$ be a sequence of rational functions with real coefficients such that for each $n\in\Lambda$:
\begin{enumerate}[a)]
    \item $\deg p_n\leq n+\kappa$, $\deg q_n\leq n$, $q_n\not\equiv 0$,
    \item $\displaystyle \frac{q_n\widehat{s}-p_n}{\omega_n}(z)=\mathcal{O}\left(\frac{1}{z^{n+1-l}}\right)\in\mathcal{H}(\C\backslash\Delta), z\rightarrow \infty$, for some fixed $l\in\Z_+$.
\end{enumerate}
We say that $\{R_n\}_{n\in\Lambda}$ is a sequence of incomplete diagonal multi-point Pad\'e approximants of $\widehat{s}$.
\end{definition}

For sequences of incomplete diagonal multi-point Pad\'e approximants, the following Stieltjes type theorem was proved in \cite[Lemma 2]{Bust_Lago} in terms of convergence in logarithmic capacity.  Using Hausdorff content the proof is basically the same and in fact simpler since the Hausdorff content of a set is easier to estimate than its logarithmic capacity.

\begin{lemma}
\label{Conv_MP_Incom_Pade}
Let $s\in\mathcal{M}(\Delta)$ be given, where $\Delta$ is contained in a half line. Assume that $\{R_n\}_{n\in\N}$ satisfies \emph{a)-b)} and either the number of zeros of $\omega_n$ lying on a bounded segment of $\R\backslash\Delta$ tends to infinity as $n\rightarrow\infty$, $n\in\Lambda$, or $s$ satisfies Carleman's condition. Then
\begin{equation*}
    h-\lim_{n\in\Lambda} R_n = \widehat{s},\quad \textrm{ inside } \quad \C\backslash\Delta.
\end{equation*}
\end{lemma}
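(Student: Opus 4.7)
My starting point is Lemma \ref{Remainder_type_i} applied with $m=1$, $a_0 = -p_n$, $a_1 = q_n$, $w = \omega_n$, and $N = n+1-l$. After normalizing $q_n$ to be monic, this simultaneously gives the integral representation
\[
\frac{q_n(z)\widehat{s}(z) - p_n(z)}{\omega_n(z)} = \int \frac{q_n(x)\, ds(x)}{\omega_n(x)(z-x)}
\]
and the orthogonality relations $\int x^\nu q_n(x)\, ds(x)/\omega_n(x) = 0$ for $\nu = 0,1,\dots,n-l-1$. Since $\omega_n$ is a real polynomial with zeros in $\R\diff\Delta$, it has constant sign on $\Delta$; hence $ds/\omega_n$ is a signed measure of constant sign, and the orthogonality conditions force $q_n$ to have at least $n-l$ sign changes in $\mathring{\Delta}$.

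\textbf{Key identity and estimate.} I would choose a monic polynomial $Q_n$ of degree exactly $n-l$ whose zeros are $n-l$ of the sign changes of $q_n$ on $\Delta$, and factor $q_n = Q_n\,\tilde q_n$ with $\deg \tilde q_n \le l$. Since $(Q_n(z)-Q_n(x))/(z-x)$ is a polynomial in $x$ of degree $n-l-1$, the orthogonality relations then yield
\[
\widehat{s}(z) - R_n(z) = \frac{\omega_n(z)}{q_n(z)\,Q_n(z)}\int \frac{Q_n(x)^2\,\tilde q_n(x)\, ds(x)}{\omega_n(x)(z-x)}.
\]
On a compact $K \subset \C\diff\Delta$ that stays away from the at most $l$ zeros of $\tilde q_n$, the prefactor is bounded, and the integrand is controlled because $Q_n(x)^2/\omega_n(x)$ has constant sign on $\Delta$.

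\textbf{Normality and identification of the limit.} Given $\varepsilon > 0$, cover the at most $l$ zeros of $\tilde q_n$ in $K$ by disks whose radii sum to at most $\varepsilon$; this exceptional set $\mathcal{E}_n$ satisfies $h(K\cap\mathcal{E}_n)\le\varepsilon$. Off $\mathcal{E}_n$ a standard normal families argument allows extraction, from any subsequence, of a further subsequence along which $R_n$ converges locally uniformly to some $\phi$ on a dense open subset of $\C\diff\Delta$. To show $\phi \equiv \widehat s$: in the \emph{bounded-segment} alternative, the zeros of $\omega_n$ accumulating on a bounded segment of $\R\diff\Delta$ are honest interpolation points of $R_n$ for $\widehat s$ (since $R_n - \widehat s$ vanishes at any zero of $\omega_n$ where $q_n$ is nonzero), so the identity principle forces $\phi \equiv \widehat s$; in the \emph{Carleman} alternative, the $n+1-l$ matching Taylor coefficients at infinity propagate in the limit to give $\phi$ the full moment expansion of $\widehat s$, and determinacy of the Hamburger moment problem (guaranteed by Carleman's condition) identifies $\phi$ as $\widehat s$. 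Since every subsequence of $\{R_n\}$ thus admits a further subsequence converging to $\widehat s$ off an arbitrarily small exceptional set, subadditivity of $h(\cdot)$ promotes this to convergence in Hausdorff content of the full sequence.

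\textbf{Main obstacle.} The crucial hurdle is the up-to-$l$ ``spurious'' zeros of $q_n$ in $\C\diff\Delta$, which produce poles of $R_n$ that may accumulate anywhere and obstruct locally uniform convergence; Hausdorff content is precisely the tool suited to absorbing such finite obstructions, which is why the present proof is simpler than the logarithmic-capacity version of \cite{Bust_Lago}. The most delicate step is the Carleman branch of limit identification, since it requires upgrading convergence of the analytic functions $R_n$ to a conclusion about the underlying Cauchy transforms of measures through the determinacy of the moment problem.
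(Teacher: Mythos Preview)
The paper does not supply its own proof of this lemma; it cites \cite[Lemma~2]{Bust_Lago} (proved there for convergence in logarithmic capacity) and remarks that the Hausdorff-content version follows by the same argument, the only simplification being that the exceptional set consisting of at most $l$ spurious poles of $R_n$ is covered by disks of total radius $\varepsilon$ without any potential-theoretic estimate. Your outline reproduces exactly that route: the integral representation and orthogonality via Lemma~\ref{Remainder_type_i}, the factoring $q_n=Q_n\tilde q_n$ with $\deg\tilde q_n\le l$, removal of the free zeros, a normal-families extraction off the exceptional set, and the two-branch identification of the limit (interpolation plus the identity principle in the bounded-segment alternative, moment determinacy in the Carleman alternative). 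So you are on the intended track.

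The one place where your sketch is genuinely incomplete is the assertion that ``the prefactor is bounded.'' In
\[
\widehat s(z)-R_n(z)=\frac{\omega_n(z)}{Q_n(z)^{2}\,\tilde q_n(z)}\int\frac{Q_n(x)^{2}\,\tilde q_n(x)}{\omega_n(x)(z-x)}\,ds(x),
\]
the factor $\omega_n(z)/Q_n(z)^{2}$ is a ratio of polynomials whose degrees grow linearly in $n$, with zeros lying on the disjoint sets $\R\backslash\Delta$ and $\mathring\Delta$ respectively; it is \emph{not} bounded on $K$ uniformly in $n$ by inspection, and the constant-sign observation on $\Delta$ controls only the integrand, not the outer factor. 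What must actually be shown is that the full product---equivalently, that $\tilde q_n(z)\bigl(\widehat s(z)-R_n(z)\bigr)$---is locally uniformly bounded on $\C\backslash\Delta$, and this is precisely the analytic work carried out in \cite{Bust_Lago} (exploiting the orthogonality of $q_n$ with respect to the varying measure $ds/\omega_n$). Once that bound is in hand, your normal-families step and both branches of your limit identification are correct and standard.
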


The proof of Theorem \ref{Th_conv_unif} goes as follows. Instead of proving convergence with  the uniform norm we will establish similar results but in Hausdorff content. On the other hand, we study the location of the zeros of the polynomials $a_{n,m}$. With this additional information and a very useful lemma of A. A. Gonchar \cite[Lemma 1]{gonchar} we can derive convergence in the uniform norm from convergence in Hausdorff content.

\section{Proof of main result and consequences}

\subsection{General properties of zeros}

Our first result is related with the location of the zeros of the polynomials $a_{n,j}$ and the forms $\mathcal{A}_{n,j}$. As above $T = \lcm(t_1,\ldots,t_m) $ and $D=\deg T$.

\begin{lemma} \label{Zeros_a_nm}
For each $n \geq 2D$, the form $\mathcal{A}_{n,j}, j=1,\ldots,m,$ has at least $n-2D$ sign changes in $\mathring{\Delta}_j$ and at most $n$ zeros in $\C \setminus \Delta_{j+1}$ ($\Delta_{m+1} = \emptyset$). If the zeros of $T$ lie outside of $\Delta_1$ then $\mathcal{A}_{n,j}, j=1,\ldots,m,$ has at least $n-D$ sign changes in $\mathring{\Delta}_j$. The form $\mathcal{A}_{n,0}$ has at most $2D$ zeros in $\C \setminus \Delta_1$ and this quantity reduces to $D$ should the zeros of $T$ lie in the complement of $\Delta_1$. If the zeros of $T$ lie outside $\Delta_1$ and for some $n$ we know that $a_{n,m}$ has exactly $n-D$ sign changes on $\Delta_m$ then, $\mathcal{A}_{n,0}$ cannot have zeros in $\C \setminus \Delta_1$ and $\mathcal{A}_{n,j}, j=1,\ldots,m-1$   has exactly $n-D$ zeros in $\C \setminus \Delta_{j+1}$ they are all simple and lie on $\Delta_j$.
\end{lemma}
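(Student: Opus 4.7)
The plan is to apply Lemma~\ref{Remainder_type_i} iteratively, descending through the levels of the Nikishin system, while absorbing the rational perturbations $r_k$ by pre-multiplying $\mathcal{A}_{n,0}$ by $T$. Since each $Tr_k$ is a polynomial, $T\mathcal{A}_{n,0}$ is a genuine polynomial combination of $1,\widehat{s}_{1,1},\ldots,\widehat{s}_{1,m}$ of the form covered by Lemma~\ref{Remainder_type_i}; it is analytic in $\C\setminus\Delta_1$, and \eqref{Problem1} gives it decay $\mathcal{O}(1/z^{n+1-D})$ at infinity. The lemma then produces
\[ T(z)\mathcal{A}_{n,0}(z) = \int \frac{T(x)\mathcal{A}_{n,1}(x)}{z-x}\D\sigma_1(x) \]
together with $\int x^\nu T(x)\mathcal{A}_{n,1}(x)\D\sigma_1(x) = 0$ for $\nu=0,\ldots,n-D-1$, forcing $T\mathcal{A}_{n,1}$ to have at least $n-D$ sign changes on $\mathring{\Delta}_1$. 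As $T$ has at most $D$ sign changes on $\Delta_1$ in general (and none when its zeros lie in $\C\setminus\Delta_1$), $\mathcal{A}_{n,1}$ itself accumulates at least $n-2D$ (respectively $n-D$) sign changes there.

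These sign changes propagate to higher levels by induction on $j$. If $\mathcal{A}_{n,j}$ has at least $n_j$ sign changes on $\mathring{\Delta}_j$, collect them in a monic polynomial $W_{n,j}$ of degree $n_j$ with simple roots; then $\mathcal{A}_{n,j}/W_{n,j}$ is analytic in $\C\setminus\Delta_{j+1}$ and $\mathcal{O}(1/z^{n_j+1})$ at infinity, so Lemma~\ref{Remainder_type_i} applied to the truncated system $\mathcal{N}(\sigma_{j+1},\ldots,\sigma_m)$ with weight $W_{n,j}$ yields $\int y^\nu\mathcal{A}_{n,j+1}(y)\D\sigma_{j+1}(y)/W_{n,j}(y)=0$ for $\nu=0,\ldots,n_j-1$. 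Since $W_{n,j}$ has constant sign on $\Delta_{j+1}$ (its zeros lie in $\Delta_j$, disjoint from $\Delta_{j+1}$), $\mathcal{A}_{n,j+1}$ inherits at least $n_j$ sign changes on $\mathring{\Delta}_{j+1}$. The upper bounds on zeros in the complement follow by reversing this device: if $\zeta_j$ denotes the total multiplicity of the zeros of $\mathcal{A}_{n,j}$ in $\C\setminus\Delta_{j+1}$, collect them in a polynomial $Q_j$ of degree $\zeta_j$; then $\mathcal{A}_{n,j}/Q_j$ is $\mathcal{O}(1/z^{\zeta_j+1})$, and iterating Lemma~\ref{Remainder_type_i} through the remaining levels forces $\mathcal{A}_{n,m}=(-1)^m a_{n,m}$ to have at least $\zeta_j$ sign changes on $\Delta_m$, whence $\zeta_j\leq\deg a_{n,m}\leq n$. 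Analogously, $T\mathcal{A}_{n,0}/Q_0$ has decay $\mathcal{O}(1/z^{n+1-D+\zeta_0})$, and the iteration produces at least $n-D+\zeta_0-d_T$ sign changes of $\mathcal{A}_{n,m}$ on $\Delta_m$, where $d_T\leq D$ is the number of sign changes of $T$ on $\Delta_1$; this yields $\zeta_0\leq 2D$ in general and $\zeta_0\leq D$ when the zeros of $T$ lie outside $\Delta_1$.

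For the concluding assertion, assume the zeros of $T$ lie outside $\Delta_1$ and $a_{n,m}$ has exactly $n-D$ sign changes on $\Delta_m$. Since the sign-change count is monotone non-decreasing in the propagation above, with $n_1\geq n-D$ and $n_m=n-D$, each $\mathcal{A}_{n,j}$ has exactly $n-D$ sign changes on $\Delta_j$. Any additional zero of $\mathcal{A}_{n,j}$ in $\C\setminus\Delta_{j+1}$ beyond these simple sign changes (be it off $\Delta_j$ or as excess multiplicity at a sign change) would, when absorbed together with the sign changes into a single polynomial $P$ of degree strictly exceeding $n-D$, render $\mathcal{A}_{n,j}/P$ analytic and non-vanishing in $\C\setminus\Delta_{j+1}$ with extra decay; the iteration would then force $\mathcal{A}_{n,m}$ to have strictly more than $n-D$ sign changes on $\Delta_m$, a contradiction. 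The same reasoning applied to $T\mathcal{A}_{n,0}$ shows $\mathcal{A}_{n,0}$ has no zeros in $\C\setminus\Delta_1$. The main technical delicacy I anticipate is the precise tracking of the sign behaviour of the auxiliary polynomials $T$, $W_{n,j}$, $Q_j$ and $P$ on the successive intervals, and checking at each stage that Lemma~\ref{Remainder_type_i} applies to the correct shifted Nikishin system with the correct order of decay.
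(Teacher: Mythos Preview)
Your proposal follows essentially the same route as the paper: clear the rational perturbations by passing to $T\mathcal{A}_{n,0}$, apply Lemma~\ref{Remainder_type_i} at level $1$ to obtain orthogonality for $T\mathcal{A}_{n,1}$, and then propagate sign changes down the chain $\Delta_1,\Delta_2,\ldots,\Delta_m$ by repeatedly dividing through by a polynomial carrying the accumulated sign changes and reapplying Lemma~\ref{Remainder_type_i}. The upper-bound and final-statement arguments by contradiction (packaging excess zeros into an auxiliary divisor and iterating to force too many sign changes of $a_{n,m}$) are likewise the same device the paper uses.

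There is one genuine omission. Your upper-bound step ``$\zeta_j\le\deg a_{n,m}\le n$'' tacitly assumes $a_{n,m}\not\equiv 0$; the definition of the Hermite--Pad\'e polynomials only guarantees that \emph{not all} of the $a_{n,j}$ vanish. The paper inserts a short argument here: from \eqref{relint} with $w\equiv 1$ one has $\mathcal{A}_{n,j}(z)=\int\frac{\mathcal{A}_{n,j+1}(x)}{z-x}\,\D\sigma_{j+1}(x)$ for $j=1,\ldots,m-1$, so $a_{n,m}\equiv 0$ forces successively $a_{n,m-1}\equiv 0,\ldots,a_{n,1}\equiv 0$, and then the integral representation for $\mathcal{L}_{n,0}$ gives $a_{n,0}\equiv 0$, contradicting non-triviality. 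You should add this before invoking $\deg a_{n,m}\le n$ as a finite bound.
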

\begin{proof}  Fix $n \geq 2D$. Consider the linear form
\begin{align*}
    \mathcal{L}_{n,0}(z) :=& T(z)\mathcal{A}_{n,0}(z) = \left[a_{n,0}T + \sum_{k=1}^m (-1)^k a_{n,k}Tr_k + \sum_{k=1}^m (-1)^k a_{n,k}T\widehat{s}_{1,k}\right](z)\nonumber\\
                         =& \left[p_{n,0} + \sum_{k=1}^m (-1)^k p_{n,k}\widehat{s}_{1,k}\right](z) = \mathcal{O}\left(\frac{1}{z^{n-D+1}}\right),
\end{align*}
where
\begin{equation}\label{ps}
    p_{n,0} = a_{n,0}T + \sum_{k=1}^m (-1)^k a_{n,k}Tr_k,\qquad p_{n,k} =   a_{n,k}T,\quad k=1,\ldots,m.
\end{equation}

Using Lemma \ref{Remainder_type_i}, in particular (\ref{Orth_remainder}), we obtain the following orthogonality relations
\begin{equation*}
    \int x^\nu \mathcal{L}_{n,1}(x)\D \sigma_1(x) =0,\qquad \nu = 0,1,\ldots,n-D-1,
\end{equation*}
and $\mathcal{L}_{n,1}:= - p_{n,1} + \sum_{k=2}^m (-1)^k p_{n,k}\widehat{s}_{2,k}$ has at least $n-D$ sign changes on $\mathring{\Delta}_1$.

Notice that
\begin{equation*}
    \mathcal{L}_{n,1} = - p_{n,1} + \sum_{k=2}^m (-1)^k p_{n,k}\widehat{s}_{2,k}
                         = -Ta_{n,1} + \sum_{k=2}^m (-1)^k Ta_{n,k}\widehat{s}_{2,k}
                         = \mathcal{A}_{n,1}T.
\end{equation*}
Therefore, $\mathcal{A}_{n,1}$ has at least $n-2D$ sign changes in the interior of $\Delta_1$ ($D$ sign changes may be on account of $T$). However, if the zeros of $T$ are in the complement of $\Delta_1$ then we can affirm that $\mathcal{A}_{n,1}$ has at least $n-D$ sign changes in the interior of $\Delta_1$. These two situations are accountable for the different statements on the number of sign changes of $\mathcal{A}_{n,j}$ on $\Delta_j$.

Let $w_{n,1}$ be a polynomial with simple zeros at the points of sign change of $\mathcal{A}_{n,1}$ on $\mathring{\Delta}_1$. In general $\deg w_{n,1} \geq n-2D$, but $\deg w_{n,1} \geq n-D$ if the zeros of $T$ lie outside $\Delta_1$. Therefore,
\begin{equation*}
     \mathcal{H}(\C\diff\Delta_2)\ni\frac{\mathcal{A}_{n,1}}{w_{n,1}} = \mathcal{O}\left(\frac{1}{z^{ \deg( w_{n,1})+1}}\right).
\end{equation*}

Notice that $\mathcal{A}_{n,1}$ and $w_{n,1}$ satisfy the hypothesis of Lemma \ref{Remainder_type_i}, so
\begin{equation}
    \frac{\mathcal{A}_{n,1}(z)}{w_{n,1}(z)} = \int \frac{\mathcal{A}_{n,2}(x)}{z-x}\frac{\D\sigma_2(x)}{w_{n,1}(x)},\nonumber
\end{equation}
and
\begin{equation}
    \int x^\nu \mathcal{A}_{n,2}(x)\frac{\D\sigma_2(x)}{w_{n,1}(x)}=0,\qquad \nu = 0,1,\ldots, \deg(w_{n,1})-1.\nonumber
\end{equation}
This yields that $\mathcal{A}_{n,2}$ has at least $\deg(w_{n,1})$ sign changes in the interior of $\Delta_2$.

Again, let $w_{n,2}$ be a polynomial with simple zeros at the points of sign change of $\mathcal{A}_{n,2}$ in $\Delta_2$. Hence, $\deg (w_{n,2}) \geq \deg(w_{n,1})$ and
\begin{equation*}
    \mathcal{H}(\C\diff\Delta_3)\ni\frac{\mathcal{A}_{n,2}}{w_{n,2}} = \mathcal{O}\left(\frac{1}{z^{\deg(w_{n,1})+1}}\right).
\end{equation*}
Then, we have deduced the same conclusions for $\mathcal{A}_{n,2}$ as  we had for $\mathcal{A}_{n,1}$, and we can repeat the same reasonings inductively obtaining that for each $j=1,\ldots,m-1$ there exists a polynomial $w_{n,j}, \deg (w_{n,j}) \geq \deg(w_{n,1}),$ with simple zeros at the points of sign change of $\mathcal{A}_{n,j}$ on $\Delta_j$ such that
\begin{equation}\label{Amj}
    \mathcal{H}(\C\diff\Delta_{j+1})\ni\frac{\mathcal{A}_{n,j}}{w_{n,j}} = \mathcal{O}\left(\frac{1}{z^{\deg(w_{n,1})+1}}\right).
\end{equation}

For $j=m-1$, we have
\begin{equation}
    \mathcal{H}(\C\diff\Delta_m)\ni\frac{a_{n,m}\widehat{s}_{m,m}-a_{n,m-1}}{w_{n,m-1}}(z) = \mathcal{O}\left(\frac{1}{z^{\deg(w_{n,1})+1}}\right)\label{HP_s_mm},
\end{equation}
and using again Lemma \ref{Remainder_type_i}, we obtain
\begin{equation*}
    \int x^\nu a_{n,m}(x)\frac{\D s_{m,m}(x)}{w_{n,m-1}(x)}=0,\qquad \nu =0,1,\ldots\deg(w_{n,1})-1.
\end{equation*}
Whence, $a_{n,m}$ has at least $\deg(w_{n,1})$ sign changes on $\Delta_m$.  Recall that in general $\deg(w_{n,1}) \geq n -2D$ and its degree is $\geq n-D$ if the zeros of $T$ lie outside $\Delta_1$. This settles the question on the number of sign changes of the forms on the different intervals.

Now let us consider the question of an upper bound on the total number of zeros that $\mathcal{A}_{n,j}, j=0,\ldots,m-1$ may have in $\C \setminus \Delta_{j+1}$. The arguments are pretty much the same. We will play on the fact that $\deg(a_{n,m}) \leq n$ and $a_{n,m} \not\equiv 0$.

Assume that $a_{n,m} \equiv 0$. From \eqref{relint} with $w\equiv 1$ it follows that for each $j=1,\ldots,m-1$
\begin{equation*}
    {\mathcal{A}_{n,j}(z)}  = \int \frac{\mathcal{A}_{n,j+1}(x)}{z-x} {\D \sigma_{j+1}(x)} .
\end{equation*}
Since $\mathcal{A}_{n,m} = (-1)^m a_{n,m}$, this formula with $j= m-1$ readily implies that $a_{n,m-1} \equiv 0$ and $\mathcal{A}_{n,m-1}\equiv 0$ if $a_{n,m} \equiv 0$. Going down on the indices $j$ we conclude that $a_{n,j} \equiv 0$ and $\mathcal{A}_{n,j}\equiv 0$ for all $j=1,\ldots,m$. Formula \eqref{relint} also implies that
\begin{equation*}
    {\mathcal{L}_{n,0}(z)}  = \int \frac{\mathcal{L}_{n,1}(x)}{z-x} {\D \sigma_{1}(x)} .
\end{equation*}
If $\mathcal{A}_{n,1} \equiv 0$ so too ${\mathcal{L}_{n,1}} \equiv 0$; consequently,  ${\mathcal{L}_{n,0}} \equiv 0$ and $a_{n,0} \equiv 0$. In particular, should $a_{n,0}\equiv 0$ then necessarily $a_{n,j}\equiv 0,j=0,\ldots,m$. However, we explicitly excluded the trivial solution in Definition 2. So $a_{n,m} \not \equiv 0$.

Suppose that $\mathcal{A}_{n,0}$ has at least $2D+1$ zeros in $\C \setminus \Delta_1$. Then, there exists a polynomial with real coefficients $w_{n,0}$ of degree $\geq 2D+1$ whose zeros lie in $\C\setminus \Delta_1$ such that
\[
    \frac{\mathcal{L}_{n,0}(z)}{w_{n,0}(z)} = \frac{ T(z)\mathcal{A}_{n,0}(z)}{w_{n,0}(z)} = \mathcal{O}\left(\frac{1}{z^{n+D+2}}\right) \in \mathcal{H}(\C \setminus \Delta_1).
\]
Using  (\ref{Orth_remainder}), we obtain
\begin{equation*}
    \int x^\nu \mathcal{L}_{n,1}(x)\frac{\D \sigma_1(x)}{w_{n,0}(x)} =0,\qquad \nu = 0,1,\ldots,n+D.
\end{equation*}
This means that $\mathcal{L}_{n,1}$ has at least $n+D+1$ sign changes on $\Delta_1$ and $\mathcal{A}_{n,1}$ at least $n+1$ sign changes on $\Delta_1$. Continuing as in the proof of the first part of the lemma we arrive at the conclusion that $a_{n,m}$ has at least $n+1$ sign changes on $\Delta_m$ which is not possible since it is a polynomial of degree $\leq n$ not identically equal to zero. Therefore, $\mathcal{A}_{n,0}$ has at most $2D$ zeros in $\C \setminus \Delta_1$. Notice that when the zeros of $T$ are in the complement of $\Delta_1$ in order to conclude that $\mathcal{A}_{n,1}$ has $n+1$ sign changes on $\Delta_1$ it is sufficient to assume that $\deg(w_{n,0}) \geq D +1$, so in this case one can prove that $\mathcal{A}_{n,0}$ has at most $D$ zeros in $\C \setminus \Delta_1$.

Suppose that $\mathcal{A}_{n,k}$ has at least $n+1$ zeros in $\C \setminus \Delta_{k+1}$ for some specific $k \in \{1,\ldots,m-1\}$ and $n$. Then  there exists a polynomial $w_{n,k}$ with real coefficients of degree $\geq n+1$ such that
\begin{equation*}
     \frac{\mathcal{A}_{n,k}}{w_{n,k}} = \mathcal{O}\left(\frac{1}{z^{n+2}}\right) \in \mathcal{H}(\C\diff\Delta_{k+1}),
\end{equation*}
which, reasoning as above, implies that $\mathcal{A}_{n,k+1}$ has at least $n+1$ sign changes on $\Delta_{k+1}$. Continuing the process one proves that for $j=k+1,\ldots,m,$ the forms $\mathcal{A}_{n,j}$ also have at least $n+1$ sign changes on $\Delta_j$ which contradicts the fact that $a_{n,m}$ cannot have more than $n$ zeros.

Finally, suppose that for some $n$ we know that $a_{n,m}$ has at exactly $n-D$ sign changes on $\Delta_m$ and $\mathcal{A}_{n,k}$ has at least $n-D+1$ zeros in $\C \setminus \Delta_{k+1}$ for some $k \in \{1,\ldots,m-1\}$. Then there exists a polynomial $w_{n,k}$ with real coefficients with zeros in $\C \setminus \Delta_{k+1}$ and degree $\geq n-D+1$ such that
\begin{equation*}
     \frac{\mathcal{A}_{n,k}}{w_{n,k}} = \mathcal{O}\left(\frac{1}{z^{n-D+2}}\right) \in \mathcal{H}(\C\diff\Delta_{k+1}).
\end{equation*}
Repeating the arguments used above it follows that for $j=k+1,\ldots,m,$ the forms $\mathcal{A}_{n,j}$ have at least $n-D+1$ sign changes on $\Delta_j$. In particular, $a_{n,m}$ would have $n-D+1$ sign changes on $\Delta_m$ against our assumption. Thus, $\mathcal{A}_{n,j},j=1,\ldots,m-1$ has at most $n-D$ zeros on $\C \setminus \Delta_{j+1}$. Since it has $n-D$ sign changes on $\Delta_j$ the statement readily follows. That $\mathcal{A}_{n,0}$ has no zeros in $\C \setminus \Delta_1$ is proved analogously.
\end{proof}

\subsection{Convergence in Hausdorff content}

We underline that in the next result no assumption is made on  the rational functions $r_k$ except that they have real coefficients.

\begin{theorem}
\label{Th_conv_Haus}
For each $n \geq 2D$, let $a_{n,0}, a_{n,1},\ldots, a_{n,m}$ be the Hermite-Pad\'e polynomials associated with the Nikishin system $\mathcal{N}(\sigma_1,\ldots,\sigma_m)$ and $(r_1,\ldots,r_m)$ such that (\ref{Problem1}) and (\ref{Problem2}) holds. Suppose that either  $\sigma_m$ satisfies Carleman's conditions or $\Delta_{m-1}$ is a bounded interval. Then,
\begin{equation*}
h-\lim_{n\to \infty} \frac{a_{n,j}}{a_{n,m}} = \widehat{s}_{m,j+1},\qquad h-\lim_{n\to \infty} \frac{a_{n,m}}{a_{n,j}} = \widehat{s}^{-1}_{m,j+1},\qquad j=1,\ldots,m-1,
\end{equation*}
and
\begin{equation}
  h-\lim_{n\to \infty} \frac{a_{n,0}}{a_{n,m}} = f = \widehat{s}_{m,1} - \sum_{k=1}^{m-1} (-1)^{k}\widehat{s}_{m,k+1}r_k - (-1)^m r_m, \label{limit_an0_anm}
\end{equation}
on each compact subset $\mathcal{K}\subset\C\diff\Delta_m$. Moreover, the polynomial $a_{n,j}$, $j=1,\ldots,m-1$, has at least $n-2D-m+j$ sign changes on $\Delta_m$. If the zeros of the polynomial $T$ lie in the complement of $\Delta_1$ then the polynomial $a_{n,j}$, $j=1,\ldots,m-1$, has at least $n-D-m+j$ sign changes in $\Delta_m$.
\end{theorem}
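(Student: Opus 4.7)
The plan is a downward induction on $j$, from $j=m-1$ through $j=1$, and finally $j=0$, recognizing at each stage the error of approximation as a sequence of incomplete multi-point Pad\'e approximants in the sense of Definition~\ref{Incomplete_Pade} so that Lemma~\ref{Conv_MP_Incom_Pade} applies.

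For the base case $j=m-1$, no new work beyond the proof of Lemma~\ref{Zeros_a_nm} is required: relation \eqref{HP_s_mm} together with the degree bounds $\deg a_{n,m-1},\deg a_{n,m},\deg w_{n,m-1}\leq n$ and $\deg w_{n,1}\geq n-2D$ identifies $\{a_{n,m-1}/a_{n,m}\}$ as a sequence of incomplete multi-point Pad\'e approximants of $\widehat{s}_{m,m}$ with interpolation nodes in $\Delta_{m-1}$. Under either Carleman's condition on $\sigma_m$, or boundedness of $\Delta_{m-1}$ (so the nodes accumulate on a bounded subset of $\R\setminus\Delta_m$), Lemma~\ref{Conv_MP_Incom_Pade} yields $h$-$\lim a_{n,m-1}/a_{n,m}=\widehat{s}_{m,m}$ on compacts of $\C\setminus\Delta_m$.

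The inductive step is driven by a Nikishin structural identity. From $\widehat{s}_{i,i+1}+\widehat{s}_{i+1,i}=\widehat{s}_{i,i}\widehat{s}_{i+1,i+1}$ (an immediate consequence of Fubini and partial fractions since $\Delta_i\cap\Delta_{i+1}=\emptyset$), iterated through \eqref{QuotCauchyTran} and its analogues, one obtains
\[
\widehat{s}_{m,j+1}=(-1)^{j+1}\sum_{k=j+1}^{m}(-1)^{k}\widehat{s}_{m,k+1}\widehat{s}_{j+1,k},\qquad\widehat{s}_{m,m+1}:=1.
\]
Dividing \eqref{Problem2} by $a_{n,m}$ and subtracting this identity yields
\[
\widehat{s}_{m,j+1}-\frac{a_{n,j}}{a_{n,m}}=(-1)^{j+1}\frac{\mathcal{A}_{n,j}}{a_{n,m}}+(-1)^{j+1}\sum_{k=j+1}^{m-1}(-1)^{k}\widehat{s}_{j+1,k}\left(\widehat{s}_{m,k+1}-\frac{a_{n,k}}{a_{n,m}}\right).
\]
The inductive hypothesis sends the finite sum to zero in Hausdorff content off $\Delta_m$, and the remaining piece $\mathcal{A}_{n,j}/a_{n,m}$ is handled by a second application of Lemma~\ref{Conv_MP_Incom_Pade}: the decay from \eqref{Amj} together with the orthogonality satisfied by $a_{n,m}$ on $\Delta_m$ inside the proof of Lemma~\ref{Zeros_a_nm} recognizes $\mathcal{A}_{n,j}/a_{n,m}$ as an incomplete Pad\'e approximant of the zero function.

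The $j=0$ limit follows from the same scheme applied to $\mathcal{A}_{n,0}$; the rational perturbations $r_k$ contribute exactly the extra summands entering $f$. The reciprocal statement $a_{n,m}/a_{n,j}\to\widehat{s}_{m,j+1}^{-1}$ is immediate from direct convergence, since $\widehat{s}_{m,j+1}$ is nonvanishing on $\C\setminus\Delta_m$. The sign-change count $n-2D-m+j$ for $a_{n,j}$ on $\Delta_m$ is inherited from the $n-2D$ sign-change count for $a_{n,m}$ there (Lemma~\ref{Zeros_a_nm}): with $\widehat{s}_{m,j+1}$ holomorphic and nonvanishing on $\C\setminus\Delta_m$, Hausdorff-content convergence forces $a_{n,j}$ to share all but at most $m-j$ of those sign changes. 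The main obstacle is establishing the structural identity in full generality and simultaneously controlling $\mathcal{A}_{n,j}/a_{n,m}$ well enough to set up Lemma~\ref{Conv_MP_Incom_Pade} cleanly at each inductive step.
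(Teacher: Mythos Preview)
Your inductive decomposition has a genuine gap: the identity you use,
\[
\widehat{s}_{m,j+1}-\frac{a_{n,j}}{a_{n,m}}=(-1)^{j+1}\frac{\mathcal{A}_{n,j}}{a_{n,m}}+(-1)^{j+1}\sum_{k=j+1}^{m-1}(-1)^{k}\widehat{s}_{j+1,k}\left(\widehat{s}_{m,k+1}-\frac{a_{n,k}}{a_{n,m}}\right),
\]
involves the functions $\widehat{s}_{j+1,k}$ and $\mathcal{A}_{n,j}$, both of which are singular on $\Delta_{j+1}$. Consequently the right-hand side only makes sense, and your argument only runs, on $\C\setminus(\Delta_{j+1}\cup\Delta_m)$, whereas the theorem asserts convergence on all compacts of $\C\setminus\Delta_m$. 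You cannot simply extend across $\Delta_{j+1}$: the left-hand side is a rational function minus something holomorphic off $\Delta_m$, but you have no a priori control on the poles of $a_{n,j}/a_{n,m}$ near $\Delta_{j+1}$. Relatedly, your claim that $\mathcal{A}_{n,j}/a_{n,m}$ is ``an incomplete Pad\'e approximant of the zero function'' is not well-posed: $\mathcal{A}_{n,j}$ is not a polynomial, so Definition~\ref{Incomplete_Pade} and Lemma~\ref{Conv_MP_Incom_Pade} do not apply to it.

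The paper avoids this difficulty by a different mechanism: starting from $\mathcal{A}_{n,j}$ it \emph{eliminates} the Cauchy transforms $\widehat{s}_{j+1,k}$ one at a time, dividing at each step by the leading transform and invoking \eqref{CauchyTranInver}--\eqref{QuotCauchyTran} to rewrite the quotient as a new linear form generated by a Nikishin system shifted one interval to the right. After $m-j-1$ such reductions one arrives at the relation \eqref{sign_changes}, namely $(a_{n,j}-a_{n,m}\widehat{s}_{m,j+1})/\widetilde{w}_{n,j}$ holomorphic in $\C\setminus\Delta_m$ with decay $\mathcal{O}(z^{-(n-2D-m+j+2)})$, where $\widetilde{w}_{n,j}$ has its zeros in $\Delta_{m-1}$. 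This is a genuine incomplete Pad\'e setup on the correct domain. Your sign-change argument is also incorrect: Hausdorff-content convergence on compacts of $\C\setminus\Delta_m$ says nothing about behaviour on $\Delta_m$ itself. In the paper the sign changes of $a_{n,j}$ on $\Delta_m$ come instead from the orthogonality relations $\int x^\nu a_{n,j}\,\D\tau_{m,j+1}/\widetilde{w}_{n,j}=0$ obtained by dividing \eqref{sign_changes} by $\widehat{s}_{m,j+1}$ and applying Lemma~\ref{Remainder_type_i} once more.
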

\begin{proof} Let us point out that if $\sigma_m$ satisfies Carleman's condition so do the measures $s_{m,j}$ and $\tau_{m,j}, j=1,\ldots,m$, see \cite[Theorem 1.5]{Lago_Sergio}. We reduce the proof of the limit relations to Lemma \ref{Conv_MP_Incom_Pade}.

Assume that $n \geq 2D$. Notice that  (\ref{HP_s_mm}) tells us that the polynomials $a_{n,m-1}$, $a_{n,m}$ and $w_{n,m-1}$ satisfy the conditions of Definition \ref{Incomplete_Pade}. Therefore, the rational fractions $a_{n,m-1}/a_{n,m}$ form a sequence of incomplete diagonal multi-point Pad\'e approximants of $\widehat{s}_{m,m}$.

Using Lemma \ref{Conv_MP_Incom_Pade} we have  convergence in Hausdorff content on each compact subset of $\C\diff\Delta_m$. That is,
$$ h-\lim_{n}\frac{a_{n,m-1}}{a_{n,m}}(z) = \widehat{s}_{m,m}(z).$$
Dividing $\frac{\mathcal{A}_{n,m-1}}{w_{n,m-1}}$ by $\widehat{s}_{m,m}=\widehat{\sigma}_m$ and using (\ref{CauchyTranInver}), we obtain
$$\frac{(a_{n,m-1}\ell_m-a_{n,m})+a_{n,m-1}\widehat{\tau}_m}{w_{n,m-1}}(z) = \mathcal{O}\left(\frac{1}{z^{n-D}}\right).$$
So, we have again a sequence of incomplete multi-point approximants of $\widehat{\tau}_m$ and,  consequently,
\begin{equation*}
    h-\lim_{n}\left( \ell_m - \frac{a_{n,m}}{a_{n,m-1}}\right)(z) = \widehat{\tau}_{m}(z),
\end{equation*}
which is equivalent to
\begin{equation*}
    h-\lim_{n}\frac{a_{n,m}}{a_{n,m-1}}(z) = \widehat{\sigma}_{m}^{-1}(z)
\end{equation*}
on compact subsets of $\C\diff\Delta_m$.

Now, using (\ref{CauchyTranInver}) and (\ref{QuotCauchyTran}), for $j=1,\ldots,m-2$, we have
\begin{multline*}
    \frac{\mathcal{A}_{n,j}}{\widehat{\sigma}_{j+1}} = \left((-1)^j \ell_{j+1}a_{n,j} + (-1)^{j+1}a_{n,j+1} + \sum_{k=j+2}^{m}(-1)^k \frac{|s_{j+1,k}|}{|\sigma_{j+1}|}a_{n,k}\right)\\ + (-1)^j a_{n,j}\widehat{\tau}_{j+1} - \sum_{k=j+2}^m (-1)^k a_{n,k}\langle \tau_{j+1},\langle s_{j+2,k},\sigma_{j+1}\rangle\widehat{\rangle}.
\end{multline*}
The quotient $\frac{\mathcal{A}_{n,j}}{\widehat{\sigma}_{j+1}}$ has the same structure as $\mathcal{A}$ in Lemma \ref{Remainder_type_i}.
Moreover, using \eqref{Amj}, we obtain
\begin{equation*}
    \frac{\mathcal{A}_{n,j}(z)}{(\widehat{\sigma}_{j+1}w_{n,j})(z)} = \mathcal{O}\left(\frac{1}{z^{n-2D}}\right) \in \mathcal{H}(\C\diff\Delta_{j+1}),
\end{equation*}
and, as consequence of (\ref{Orth_remainder}), for $\nu=0,\ldots,n-2D-2$, it follows that
\begin{equation*}
    0 = \int_{\Delta_{j+1}} x^\nu \left((-1)^j a_{n,j} - \sum_{k=j+2}^m (-1)^k a_{n,k}\langle s_{j+2,k},\sigma_{j+1}\widehat{\rangle}\right)(x) \frac{\D \tau_{j+1}(x)}{w_{n,j}(x)}.
\end{equation*}
The expression in parenthesis under the integral sign has at least $n-2D-1$ sign changes in $\mathring{\Delta}_{j+1}$. Thus, there exists a polynomial $w^*_{n,j}$ of degree $n-2D-1$ whose zeros are simple and lie in $\mathring{\Delta}_{j+1}$ such that
$$\frac{1}{w^*_{n,j}}\left((-1)^j a_{n,j} - \sum_{k=j+2}^m (-1)^k a_{n,k}\langle s_{j+2,k},\sigma_{j+1}\widehat{\rangle}\right)\in\mathcal{H}(\C\diff\Delta_{j+2}).$$

Direct computation or \cite[Lemma 2.1]{Lago_Sergio_Jacek} allows to deduce the formula
\begin{equation}
    \mathcal{A}_{n,j}-\widehat{s}_{j+1,j+1}\mathcal{A}_{n,j+1} = (-1)^j a_{n,j} - \sum_{k=j+2}^m (-1)^k a_{n,k}\langle s_{j+2,k},\sigma_{j+1}\widehat{\rangle}.\nonumber
\end{equation}
From the statement of our problem we know that $\mathcal{A}_{n,j}-\widehat{s}_{j+1,j+1}\mathcal{A}_{n,j+1}$ is $\mathcal{O}\left({1}/{z}\right)$. Hence,
\begin{equation*}
    \frac{1}{w^*_{n,j}(z)}\left((-1)^j a_{n,j} - \sum_{k=j+2}^m (-1)^k a_{n,k}\langle s_{j+2,k},\sigma_{j+1}\widehat{\rangle}\right)(z) = \mathcal{O}\left(\frac{1}{z^{n-2D}}\right) . \qquad z\rightarrow\infty.
\end{equation*}
Notice that if $j=m-2$ we have
\begin{equation*}
    \frac{a_{n,m-2}-a_{n,m}\widehat{s}_{m,m-1}}{w^*_{n,j}}(z) = \mathcal{O}\left(\frac{1}{z^{n-2D}}\right).
\end{equation*}
Thus, $a_{n,m-2}/a_{n,m}$ is an incomplete diagonal multi-point Pad\'e approximant of $\widehat{s}_{m,m-1}$ and we obtain convergence in Hausdorff convergence on compact subsets of $\C\diff\Delta_m$

$$ h-\lim_{n\to \infty}\frac{a_{n,m-2}}{a_{n,m}}(z)= \widehat{s}_{m,m-1}(z).$$
Dividing by $\widehat{s}_{m,m-1}$ and arguing as we did above it also follows that
$$ h-\lim_{n\to \infty}\frac{a_{n,m}}{a_{n,m-2}}(z)= \widehat{s}^{-1}_{m,m-1}(z).$$

Using the identity $\langle s_{j+2,k}, s_{j+1,j+1} \rangle = \langle s_{j+2,j+1},s_{j+3,k}\rangle$ for $k=j+3,\ldots,m$, we deduce
\begin{multline}
    (-1)^j a_{n,j} - \sum_{k=j+2}^m (-1)^k a_{n,k}\langle s_{j+2,k},\sigma_{j+1}\widehat{\rangle}\\ = (-1)^{j}a_{n,j} - (-1)^{j+2}a_{n,j+2}\widehat{s}_{j+2,j+1} - \sum_{k=j+3}^m (-1)^k a_{n,k} \langle s_{j+2,j+1}, s_{j+3,k}\widehat{\rangle}.\label{Prep_Elim}
\end{multline}
As we wish to eliminate $\widehat{s}_{j+2,j+1}$ in the right hand side of (\ref{Prep_Elim}), we divide both sides by it and use again (\ref{CauchyTranInver}) and (\ref{QuotCauchyTran}). Then,
\begin{multline}
    \left( (-1)^j a_{n,j}\ell_{j+2,j+1}-(-1)^{j+2}a_{n,j+2} - \sum_{k=j+3}^m (-1)^k\frac{|\langle s_{j+2,j+1},s_{j+3,k} \rangle|}{|s_{j+2,j+1}|}\right) +\\
    (-1)^j a_{n,j}\widehat{\tau}_{j+2,j+1} + \sum_{k=j+3}^m (-1)^k a_{n,k}\langle \tau_{j+2,j+1}, \langle s_{j+3,k}, s_{j+2,j+1}\rangle\widehat{\rangle}\nonumber
\end{multline}
which is a linear form as those in Lemma \ref{Remainder_type_i}. Thus
$$\mathcal{H}(\C\diff\Delta_{j+2})\ni\frac{1}{(w^*_{n,j}\widehat{s}_{j+2,j+1})}\left((-1)^j a_{n,j} + \sum_{k=j+3}^m (-1)^k a_{n,k}\langle  s_{j+3,k}, s_{j+2,j+1}\widehat{\rangle}\right) = \mathcal{O}\left(\frac{1}{z^{n-2D-1}}\right).$$
Moreover, for $\nu=0,1,\ldots, n-2D-3$,
\begin{equation}
    \int x^\nu \left((-1)^j a_{n,j} + \sum_{k=j+3}^m (-1)^k a_{n,k}\langle  s_{j+3,k}, s_{j+2,j+1}\widehat{\rangle}\right)(x)\frac{\D \tau_{j+2,j+1}(x)}{w^*_{n,j}(x)} =0.\nonumber
\end{equation}
So, the expression in parenthesis has at least $n-2D-2$ sign changes in the interior of $\Delta_{j+2}$, and we can assure the existence of a polynomial $w^*_{n,j+1}$, $\deg w^*_{n,j+1}=n-2D-2$, with simple zeros located at the points of sign change inside $\Delta_{j+2}$ so that
\begin{equation}
    \frac{1}{w^*_{n,j+1}}\left((-1)^j a_{n,j} + \sum_{k=j+3}^m (-1)^k a_{n,k}\langle  s_{j+3,k}, s_{j+2,j+1}\widehat{\rangle}\right)\in\mathcal{H}(\C\diff\Delta_{j+3}).\nonumber
\end{equation}
Using \cite[Lemma 2.1]{Lago_Sergio_Jacek} with $r=j+2$ (or direct calculation), we have
\begin{equation}
 (-1)^j a_{n,j} + \sum_{k=j+3}^m (-1)^k a_{n,k}\langle  s_{j+3,k}, s_{j+2,j+1}\widehat{\rangle}  = \mathcal{A}_{n,j} - \widehat{s}_{j+1,j+1}\mathcal{A}_{n,j+1} + \widehat{s}_{j+1,j+1}\mathcal{A}_{n,j+2},\nonumber
\end{equation}
and from the definition of the forms $\mathcal{A}_{n,j}$ the right hand side is $\mathcal{O}(1/z)$; consequently,
\begin{equation}
    \frac{1}{w^*_{n,j+1}(z)}\left((-1)^j a_{n,j} + \sum_{k=j+3}^m (-1)^k a_{n,k}\langle  s_{j+3,k}, s_{j+2,j+1}\widehat{\rangle}\right)(z)=\mathcal{O}\left(\frac{1}{z^{n-2D-1}}\right).\nonumber
\end{equation}
In particular, if $j=m-3$, it is not difficult to see that the fraction $a_{n,m-3}/a_{n,m}$ is an incomplete diagonal multi-point Pad\'e approximant of $\widehat{s}_{m,m-2}$ from where we can deduce the Hausdorff convergence on compact subsets of $\C\diff\Delta_m$

$$h-\lim_n\frac{a_{n,m-3}}{a_{n,m}} = \widehat{s}_{m,m-2},$$
and similarly
$$h-\lim_n\frac{a_{n,m}}{a_{n,m-3}} = \widehat{s}^{-1}_{m,m-2}.$$

This process can be continued inductively. After $m-j-1$ reductions we obtain the existence of a polynomial $\widetilde{w}_{n,j}$ with degree $\geq n-2D-m+j$ with simple zeros inside $\Delta_{m-1}$ such that
\begin{equation}
\label{sign_changes}
    \frac{a_{n,j}-a_{n,m}\widehat{s}_{m,j+1}}{\widetilde{w}_{n,j}}(z) = \mathcal{O}\left(\frac{1}{z^{n-2D-m+j+2}}\right) \in\mathcal{H}(\C\diff\Delta_m), \qquad z\rightarrow\infty,
\end{equation}
which allows us to deduce that
\begin{equation}
    h-\lim_n \frac{a_{n,j}}{a_{n,m}} = \widehat{s}_{m,j+1},\nonumber
\end{equation}
on compact subsets of $\C\diff\Delta_m$.

As an immediate consequence, we have
\begin{equation}
    \frac{a_{n,j}-a_{n,m}\widehat{s}_{m,j+1}}{\widehat{s}_{m,j+1}\widetilde{w}_{n,j}}(z) = \mathcal{O}\left(\frac{1}{z^{n-2D-m+j+1}}\right) \in\mathcal{H}(\C\diff\Delta_m), \qquad z\rightarrow\infty,\nonumber
\end{equation}
but
\begin{equation}
     \frac{a_{n,j}-a_{n,m}\widehat{s}_{m,j+1}}{\widehat{s}_{m,j+1}} = a_{n,j}\widehat{\tau}_{m,j+1} - (a_{n,m}-\ell_{m,j+1}a_{n,j}).\nonumber
\end{equation}
Hence,
\begin{equation}
    \int x^\nu a_{n,j}(x)\frac{\D\tau_{m,j+1}(x)}{\widetilde{w}_{n,j}(x)} =0, \qquad \nu=0,1,\ldots,n-2D-m+j-1.\nonumber
\end{equation}
Therefore, the polynomial $a_{n,j}$ has at least $n-2D-m+j$ sign changes in $\mathring{\Delta}_m$. Also, we obtain
\begin{equation}
    h-\lim_n \frac{a_{n,m}}{a_{n,j}} = \widehat{s}^{-1}_{m,j+1}\nonumber
\end{equation}
on compact subsets of $\C\diff\Delta_m$.

To find the limit of the sequence $a_{n,0}/a_{n,m}$, $n \geq 0,$ we change a little our previous arguments. It is easy to check that the reasonings above do not change substantially if we consider the linear forms $\mathcal{L}_{n,j} := T(z)\mathcal{A}_{n,j}(z)$ instead of $\mathcal{A}_{n,j}$. The main differences are in the asymptotic orders and in the bounds for the number of sign changes in $\Delta_m$, but not in the conclusions.

In consequence, the following holds (see \eqref{ps})
\begin{equation}
    \frac{p_{n,0}-p_{n,m}\widehat{s}_{m,1}}{\widetilde{w}_{n,0}}(z) = \mathcal{O}\left(\frac{1}{z^{n-2D-m+j-1}}\right)\in \mathcal{H}(\C \setminus \Delta_m), \nonumber
\end{equation}
and we conclude that
\begin{equation}
   h-\lim_n \frac{p_{n,0}}{p_{n,m}}= \widehat{s}_{m,1}.\nonumber
\end{equation}
However,
\begin{equation}
    \frac{p_{n,0}}{p_{n,m}} = \frac{a_{n,0}T + \sum_{k=1}^m (-1)^k a_{n,k}Tr_k}{  a_{n,m}T} =  \frac{a_{n,0}}{a_{n,m}} + \sum_{k=1}^m (-1)^{k}\frac{a_{n,k}}{a_{n,m}}r_k.\nonumber
\end{equation}
Therefore, (\ref{limit_an0_anm}) readily follows.

Throughout the proof, if the zeros of   $T$ lie outside $\Delta_1$ then in the right hand side of \eqref{Amj} we can write $\mathcal{O}\left( {1}/{z^{n-D+1}}\right)$  and we can replace $2D$ with $D$ obtaining $n-D-m+j$ sign changes on $\Delta_m$ for $a_{n,j}$ as indicated in the final statement.
\end{proof}

\subsection{Proof of the main result}

\noindent
{\bf Proof of Theorem \ref{Th_conv_unif}}. In the hypothesis of this theorem the zeros of the polynomial $T$ lie outside $\Delta_1$; consequently, according to the last statement of Lemma \ref{Zeros_a_nm} the rational functions  $\frac{a_{n,0}}{a_{n,m}}$ have at most $D$ poles in $\C\diff\Delta_m$. On the other hand, we are assuming that $f$ has exactly $D$ poles in $\C \setminus \Delta_m$. From (\ref{limit_an0_anm}) and Gonchar's lemma \cite[Lemma 1]{gonchar}, we obtain that for all sufficiently large $n\in \N$ the fractions $\frac{a_{n,0}}{a_{n,m}}$ have exactly $D$ poles outside $\Delta_m$. Moreover, Gonchar's lemma asserts that each pole of $f$ in $\C \setminus \Delta_m$ attracts as many zeros of $a_{n,m}$ as its order; that is, if $\zeta \in \C \setminus \Delta_m$ is a pole of $f$ of order $\tau$ then for each $\varepsilon > 0$, there exists $n_0(\tau)\in \N$ such that for all $n\geq n_0(\tau)$ the polynomial $a_{n,m}$ has exactly $\tau$ zeros in the disk $\{z: |z-\zeta| < \varepsilon\}$. Thus the statements about the zeros of $a_{n,m}$ take place.

Fix $\varepsilon > 0$ and let $D_{\varepsilon}$ be $\C \setminus \Delta_m$ minus an $\varepsilon$ neighborhood of each pole of $f$ in this region. Then, there exists $n_0$ such that for all $n \geq n_0$ and $j=0,\ldots,m-1$ the rational functions $a_{n,j}/a_{n,m}$ are analytic in $D_{\varepsilon}$. From \cite[Lemma 1]{gonchar} it follows that the limits in Lemma \ref{Th_conv_Haus} take place uniformly on each compact subset of $D_{\varepsilon}$. Since $\varepsilon > 0$ is arbitrary, the limits in Theorem \ref{Th_conv_unif} hold.

Fix $j=1,\ldots,m-1$. Let $\zeta$ be a zero of $T$ of multiplicity $\tau$. Choose $\varepsilon>0$ small enough and $N$ sufficiently large such that $a_{n,m}$ has no zero on $\{|z-\zeta|=\varepsilon\}$ and exactly $\tau$ zeros inside the circle $\{|z-\zeta|=\varepsilon\}$ for $n\geq N$. As the function $\widehat{s}_{m,j+1}$ is holomorphic  and has no zeros in $\C\diff\Delta_m$, by the uniform convergence we get
\begin{equation}
    \lim_{n\to \infty} \int_{|z-\zeta|=\varepsilon}\frac{(a_{n,j}/a_{n,m})'}{a_{n,j}/a_{n,m}}(z)\D z = \int_{|z-\zeta|=\varepsilon}\frac{(\widehat{s}_{m,j+1})'}{\widehat{s}_{m,j+1}}(z)\D z =0.\nonumber
\end{equation}
Since $a_{n,m}$ has exactly $\tau$ zeros inside $\{|z-\zeta|=\varepsilon\}$ for all sufficiently large $n$, from the argument principle we obtain that $a_{n,j}, j=,\ldots,m-1$ also has exactly $\tau$ zeros inside that disk for all sufficiently large $n$.

Thus, in the circle $\{|z-\zeta|<\varepsilon\}$ the number of zeros of $a_{n,j}$ and $a_{n,m}$ coincide, i.e. $\zeta$ attracts as many zeros of $a_{n,j}$ as its order. We can extend this idea to a smooth Jordan curve $\Gamma$ that surrounds all zeros of $T$ and lies in $\C\diff\Delta_m$. Then $D$ zeros of $a_{n,j}$ accumulate at the zeros of $T$ counting multiplicities and the remaining ones accumulate on $\Delta_m\cup\{\infty\}$. \hfill $\Box$

Theorem \ref{Th_conv_unif} has some consequences on the convergence of the forms $\mathcal{A}_{n,j}$.

\begin{corollary} \label{conforms} Under the assumptions of Theorem \ref{Th_conv_unif}, we have
\[ \lim_{n\to \infty} \frac{\mathcal{A}_{n,j}}{a_{n,m}} = 0, \qquad j=0,\ldots,m-1,
\]
uniformly on each compact subset of $\C \setminus (\Delta_{j+1}\cup \Delta_m \cup \{z:T(z) =0\})$.

\end{corollary}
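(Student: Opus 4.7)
The plan is to divide $\mathcal{A}_{n,j}$ by $a_{n,m}$, apply Theorem~\ref{Th_conv_unif} term by term, and then verify that the resulting explicit limit is a Nikishin-system identity that is identically zero.

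Fix $j\in\{1,\ldots,m-1\}$ and let $\mathcal{K}\subset\C\diff(\Delta_{j+1}\cup\Delta_m\cup\{z:T(z)=0\})$ be compact. Formula (\ref{Problem2}) gives
\[
\frac{\mathcal{A}_{n,j}}{a_{n,m}} = (-1)^j\frac{a_{n,j}}{a_{n,m}} + \sum_{k=j+1}^{m-1}(-1)^k\frac{a_{n,k}}{a_{n,m}}\widehat{s}_{j+1,k} + (-1)^m\widehat{s}_{j+1,m}.
\]
The factors $\widehat{s}_{j+1,k}$ are uniformly bounded on $\mathcal{K}$ (since $\mathcal{K}\cap\Delta_{j+1}=\emptyset$), and Theorem~\ref{Th_conv_unif} provides $a_{n,k}/a_{n,m}\to\widehat{s}_{m,k+1}$ uniformly on $\mathcal{K}$. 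Passing to the limit gives $\mathcal{A}_{n,j}/a_{n,m}\to L_j$ uniformly on $\mathcal{K}$, where
\[
L_j := (-1)^j\widehat{s}_{m,j+1} + \sum_{k=j+1}^{m-1}(-1)^k\widehat{s}_{m,k+1}\widehat{s}_{j+1,k} + (-1)^m\widehat{s}_{j+1,m}.
\]
The case $j=0$ is handled in the same way using (\ref{Problem1}) and (\ref{polos}); the $r_k$ and $r_m$ contributions arising from the expansion of $\mathcal{A}_{n,0}/a_{n,m}$ cancel pairwise against those built into $f$, and the limit collapses to the same $L_0$.

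It remains to prove the purely algebraic identity $L_j\equiv 0$. I would do this by an outer induction on the length $m$ of the Nikishin system together with, at each $m$, a downward induction on $j$. The base case $m=1$ reduces to $L_0=\widehat{\sigma}_1-\widehat{\sigma}_1=0$, and within each fixed $m$ the case $j=m-1$ is similarly immediate. For the inductive step, the fundamental two-measure identity $\widehat{\mu}\widehat{\nu}=\langle\mu,\nu\widehat{\rangle}+\langle\nu,\mu\widehat{\rangle}$ for measures with disjoint supports, specialized as
\[
\widehat{s}_{j+1,k} = \widehat{\sigma}_{j+1}\widehat{s}_{j+2,k}-\langle s_{j+2,k},\sigma_{j+1}\widehat{\rangle}, \qquad k\geq j+2,
\]
telescopes the sum inside $L_j$ into a factor $\widehat{\sigma}_{j+1}L_{j+1}$ (which vanishes by the downward inductive hypothesis on $j$) plus the residue
\[
L_j = (-1)^j\widehat{s}_{m,j+1} - \sum_{k=j+2}^{m}(-1)^k\widehat{s}_{m,k+1}\langle s_{j+2,k},\sigma_{j+1}\widehat{\rangle}, \qquad \widehat{s}_{m,m+1}\equiv 1.
\]
Applying the same two-measure identity to each remaining product $\widehat{s}_{m,k+1}\langle s_{j+2,k},\sigma_{j+1}\widehat{\rangle}$ decomposes it into a Cauchy transform on $\Delta_{j+2}$, whose density factors as $\D\sigma_{j+2}$ times a bracket that is recognized as $L_{j+2}$ evaluated on $\Delta_{j+2}$ (zero by the downward induction on $j$), and a Cauchy transform on $\Delta_m$, whose density factors as $\D\sigma_m$ times a bracket that, after invoking the Nikishin recursion $\D s_{m,\ell}=\widehat{s}_{m-1,\ell}\D\sigma_m$, coincides (up to overall sign) with the reduction identity for the shorter system $(\sigma_1,\ldots,\sigma_{m-1})$, which is guaranteed by the outer inductive hypothesis on $m$ applied at indices $j$ and $j+1$. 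Consequently this $\Delta_m$ piece equals $(-1)^j\widehat{s}_{m,j+1}$, cancelling the leading term and giving $L_j=0$.

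The main obstacle is executing this double-indexed induction cleanly: the combinatorics of signs, supports, and nested brackets requires careful coordination of the downward induction on $j$ (for a fixed $m$) with the outer induction on $m$, and verifying that the two-step decomposition of each remaining product reassembles into instances of the $L$-identity at strictly smaller indices. Once $L_j\equiv 0$ is secured, the uniform convergence of $\mathcal{A}_{n,j}/a_{n,m}$ to zero on compacta of $\C\diff(\Delta_{j+1}\cup\Delta_m\cup\{z:T(z)=0\})$ is immediate from Theorem~\ref{Th_conv_unif} and the termwise limit computed above.
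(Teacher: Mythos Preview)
Your overall strategy---divide by $a_{n,m}$, pass to the limit term by term via Theorem~\ref{Th_conv_unif}, and then argue that the resulting expression
\[
L_j = (-1)^j\widehat{s}_{m,j+1} + \sum_{k=j+1}^{m-1}(-1)^k\widehat{s}_{m,k+1}\widehat{s}_{j+1,k} + (-1)^m\widehat{s}_{j+1,m}
\]
vanishes identically---is exactly the paper's approach. The handling of $j=0$ (cancellation of the $r_k$ against the corresponding pieces of $f$) also matches.

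The only difference is in justifying $L_j\equiv 0$. The paper does not prove this identity; it simply invokes \cite[Lemma~2.9]{ulises_lago_2}, where the relation is established once and for all. Your proposal instead sketches a double induction on $(m,j)$ built on the product formula $\widehat{\mu}\,\widehat{\nu}=\langle\mu,\nu\widehat{\rangle}+\langle\nu,\mu\widehat{\rangle}$. The first reduction you describe is clean and correct: substituting $\widehat{s}_{j+1,k}=\widehat{\sigma}_{j+1}\widehat{s}_{j+2,k}-\langle s_{j+2,k},\sigma_{j+1}\widehat{\rangle}$ does produce $\widehat{\sigma}_{j+1}L_{j+1}$ plus the residual sum you wrote down. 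The second step, however, is where your sketch becomes fragile. Applying the same product identity to $\widehat{s}_{m,k+1}\,\langle s_{j+2,k},\sigma_{j+1}\widehat{\rangle}$ requires that the supports $\Delta_m$ and $\Delta_{j+2}$ be disjoint, which is not guaranteed in a general Nikishin system (only \emph{consecutive} intervals are assumed disjoint). Moreover, the claim that one of the resulting pieces ``is recognized as $L_{j+2}$'' and the other reduces to the shorter-system identity is asserted rather than verified; in the small case $m=3$, $j=0$, for instance, the cancellation goes through because $\langle s_{2,1},\sigma_3\rangle$ and $\langle s_{2,3},\sigma_1\rangle$ turn out to be the \emph{same} measure on $\Delta_2$, which is a different mechanism from the one you describe.

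In short: the convergence part of your argument is correct and identical to the paper's; for the identity $L_j\equiv 0$ you should either cite \cite[Lemma~2.9]{ulises_lago_2} as the paper does, or---if you want a self-contained proof---replace the vague second step of your induction by the concrete observation that the residual terms $\langle s_{j+2,k},\sigma_{j+1}\rangle$ are all measures on $\Delta_{j+2}$ whose densities can be compared directly (using, e.g., the relation $\langle s_{j+2,k},\sigma_{j+1}\rangle=\langle s_{j+2,j+1},s_{j+3,k}\rangle$ that the paper itself exploits in \eqref{Prep_Elim}), rather than invoking a product formula that may not apply.
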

\begin{proof} From Theorem \ref{Th_conv_unif}  and the expression of the forms $\mathcal{A}_{n,j}$ it follows that for $j=1,\ldots,m-1,$
\[ \lim_{n\to \infty} \frac{\mathcal{A}_{n,j}}{a_{n,m}} =  (-1)^j \widehat{s}_{m,j+1} +  \sum_{k=j+1}^{m-1} (-1)^k \widehat{s}_{m,k+1}\widehat{s}_{j+1,k} + (-1)^m \widehat{s}_{j+1,m} \equiv 0,
\]
uniformly on compact subsets of $\C \setminus (\Delta_{j+1}\cup \Delta_m \cup \{z:T(z) =0\})$. The equivalence to zero of the last expression is a consequence of a well known formula appearing in \cite[Lemma 2.9]{ulises_lago_2}. Similarly,
\[ \lim_{n\to \infty} \frac{\mathcal{A}_{n,0}}{a_{n,m}} =  f +  \sum_{k=1}^{m-1} (-1)^k \widehat{s}_{m.k+1}(\widehat{s}_{1,k} + r_k)+ (-1)^m (\widehat{s}_{1,m} + r_m) \equiv 0,
\]
uniformly on compact subsets of $\C \setminus (\Delta_{1}\cup \Delta_m \cup \{z:T(z) =0\})$. In proving the equality to zero aside from the identity in
\cite[Lemma 2.9]{ulises_lago_2} one uses the expression of $f$.
\end{proof}

\subsection{Rate of convergence}

Throughout this subsection we assume that the conditions of Theorem \ref{Th_conv_unif} are in place. We will begin showing that when $\Delta_m$ is a finite interval then convergence takes place with geometric rate. We will derive this result using Theorem \ref{Th_conv_unif} and the maximum principle. Initially we need to introduce some concepts that will be needed.

Let $\varphi_t, t \in \overline{\C} \setminus \Delta_m, $ be the conformal representation of $\overline{\C} \setminus \Delta_m$ onto $\{w: |w| < 1\}$ such that $\varphi_t(t)= 0, \varphi_t^\prime (t) > 0$. It is easy to verify that $|\varphi_t(z)|$ can be extended continuously to $\overline{\C}^2$ in the two variables $z,t$ and equals zero only when $z=t$. In fact
\[ |\varphi_t(z)| = \left|\frac{\varphi_\infty(z)- \varphi_\infty(t)}{1 - \overline{\varphi_\infty(t)}\varphi_\infty(z)}\right|.
\]
Let $0 < \rho < 1$ and
\[ \gamma_\rho := \{z: |\varphi_\infty(z)|= \rho\}.
\]
Fix a compact set $\mathcal{K} \subset \overline{\C}  \setminus (\Delta_m \cup \{z: T(z) =0\})$.
Take $\rho$ sufficiently close to $1$ so that $\mathcal{K}$ lies in the unbounded connected component of the complement of $\gamma_\rho$.
Set
\begin{equation} \label{delta} \kappa_\rho := \inf\{|\varphi_t(z)|: t \in \Delta_{m-1}, z \in \gamma_\rho\}, \qquad \delta(\mathcal{K}) = \max\{|\varphi_t(z)|: t \in \Delta_{m-1}, z \in \mathcal{K}\}.
\end{equation}
From the continuity of $|\varphi_t(z)|$ in the two variables it readily follows that
\[ \lim_{\rho \to 1} \kappa_\rho = 1, \qquad \delta(\mathcal{K}) < 1.
\]
As usual, $\|\cdot\|_{\mathcal{K}}$ denotes the uniform norm on $\mathcal{K}$.

\begin{corollary} \label{cor:4}
Under the hypothesis of Theorem \ref{Th_conv_unif} if we assume additionally that $\Delta_m$ is bounded then
\begin{equation} \label{rate3}
\limsup_n \left\|\frac{a_{n,j}}{a_{n,m}} - \widehat{s}_{m,j+1}\right\|_{\mathcal{K}}^{1/n} \leq \delta(\mathcal{K})\|\varphi_\infty\|_{\mathcal{K}}< 1, \qquad j=1,\ldots,m-1,
\end{equation}
and
\begin{equation} \label{polos*}
  \limsup_n \left\|\frac{a_{n,0}}{a_{n,m}} - f\right\|_{\mathcal{K}}^{1/n} \leq \delta(\mathcal{K})\|\varphi_\infty\|_{\mathcal{K}}< 1
\end{equation}
for every compact set $\mathcal{K} \subset \C \setminus (\Delta_m \cup \{z: T(z) =0\})$ and $\delta(\mathcal{K})$ is the quantity defined in \eqref{delta}.
\end{corollary}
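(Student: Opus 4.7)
The plan is to upgrade the uniform convergence of Theorem~\ref{Th_conv_unif} to a geometric rate through a Walsh-type argument: apply the maximum modulus principle to a suitable ratio involving the conformal map $\varphi_\infty$, and combine it with asymptotic bounds for $\widetilde w_{n,j}$ and for $a_{n,m}$.

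First I would extract from the proof of Theorem~\ref{Th_conv_Haus}, under the hypothesis of Corollary~\ref{cor:4} that the zeros of $T$ lie outside $\Delta_1$ (so that $2D$ may be replaced by $D$ as indicated in the final paragraph of that proof), the relation
\[
    \frac{H_n(z)}{\widetilde w_{n,j}(z)} := \frac{a_{n,j}(z)-a_{n,m}(z)\widehat s_{m,j+1}(z)}{\widetilde w_{n,j}(z)} = \mathcal{O}\!\left(\frac{1}{z^{N+2}}\right) \in \mathcal{H}(\C\setminus\Delta_m),
\]
where $N := n-D-m+j$ and $\widetilde w_{n,j}$ is a polynomial of degree $N$ with simple zeros inside $\Delta_{m-1}$. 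Applying the maximum modulus principle to $H_n/[\widetilde w_{n,j}\,\varphi_\infty^{N+1}]$, which is holomorphic on $\overline\C\setminus\Delta_m$ (the pole of order $N+1$ of $\varphi_\infty^{-(N+1)}$ at $\infty$ is neutralized by the vanishing of the numerator), one obtains
\[
    \left|\frac{H_n(z)}{\widetilde w_{n,j}(z)}\right| \leq M_n\,|\varphi_\infty(z)|^{N+1}, \qquad z\in\C\setminus\Delta_m,
\]
with $M_n$ the supremum of $|H_n/\widetilde w_{n,j}|$ on the two-sided boundary of $\Delta_m$. Using the Cauchy-transform representation of $H_n/\widetilde w_{n,j}$ inherited from Lemma~\ref{Remainder_type_i} and the boundedness of $\Delta_m$ together with the finite total variations of the relevant measures, one checks that $M_n^{1/n}\to 1$.

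Next I would estimate $|\widetilde w_{n,j}(z)/a_{n,m}(z)|$ on $\mathcal{K}$. For each zero $t_{n,k}$ of $\widetilde w_{n,j}$ in $\Delta_{m-1}$, the quotient $(z-t_{n,k})/[\varphi_\infty(z)-\varphi_\infty(t_{n,k})]$ is bounded uniformly on $\mathcal{K}\times\Delta_{m-1}$, which translates the Blaschke estimate into $|z-t_{n,k}|\leq C\,|\varphi_{t_{n,k}}(z)|\leq C\,\delta(\mathcal{K})$, whence $|\widetilde w_{n,j}(z)|\leq (C\,\delta(\mathcal{K}))^N$ on $\mathcal{K}$. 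On the other hand, Theorem~\ref{Th_conv_unif} locates $n-D$ zeros of $a_{n,m}$ on $\Delta_m$ and $D$ outside approaching the zeros of $T$; coupled with regularity (in the sense of Stahl--Totik) of the varying orthogonality weight for $a_{n,m}$ on the bounded interval $\Delta_m$, this yields the Szeg\H o-type asymptotic $|a_{n,m}(z)|^{1/n}\to \mathrm{cap}(\Delta_m)/|\varphi_\infty(z)|$ for $z\in\mathcal{K}$ (after normalizing $a_{n,m}$ monic). Combining these estimates in
\[
    \left|\frac{a_{n,j}(z)}{a_{n,m}(z)} - \widehat s_{m,j+1}(z)\right| = \frac{|\widetilde w_{n,j}(z)|}{|a_{n,m}(z)|}\cdot\left|\frac{H_n(z)}{\widetilde w_{n,j}(z)}\right|
\]
and taking $n$-th roots delivers \eqref{rate3}. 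For \eqref{polos*} I would run the same argument on the linear form $\mathcal L_{n,0}=T\,\mathcal A_{n,0}$, obtaining the geometric convergence $p_{n,0}/p_{n,m}\to\widehat s_{m,1}$; the identity $a_{n,0}/a_{n,m}=p_{n,0}/p_{n,m}-\sum_{k=1}^{m}(-1)^k(a_{n,k}/a_{n,m})r_k$ together with \eqref{rate3} for each $a_{n,k}/a_{n,m}$ then transfers the rate to $a_{n,0}/a_{n,m}-f$.

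The main obstacle is the lower bound $|a_{n,m}(z)|^{1/n}\gtrsim \mathrm{cap}(\Delta_m)/|\varphi_\infty(z)|$: Theorem~\ref{Th_conv_unif} supplies the location of the zeros but not their $n$-th root distribution, so one must invoke regularity of the varying orthogonality weight---feasible here because $\Delta_m$ is bounded and the singularities introduced by $w_{n,m-1}$ lie on the disjoint interval $\Delta_{m-1}$.
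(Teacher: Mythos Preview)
Your approach has a genuine gap at the step where you invoke regularity in the sense of Stahl--Totik to obtain the lower bound $|a_{n,m}(z)|^{1/n}\to\capac(\Delta_m)/|\varphi_\infty(z)|$. Corollary~\ref{cor:4} assumes only the hypotheses of Theorem~\ref{Th_conv_unif} together with boundedness of $\Delta_m$; regularity of the generating measures is nowhere assumed, and boundedness of the support does not imply it (there are non-regular measures on compact intervals). The paper itself remarks, immediately after Corollary~\ref{cor5}, that the regular case is left to a forthcoming paper. So your ``main obstacle'' is not a technicality you can discharge but an extra hypothesis you are importing; without it the argument does not close. A secondary difficulty is your choice of the two-sided boundary of $\Delta_m$ for the maximum principle: the boundary values of the Cauchy integral there need not be bounded, so even the definition of $M_n$ is delicate.

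The paper sidesteps any lower bound on $|a_{n,m}|$ by working with the ratio $(a_{n,j}/a_{n,m})-\widehat s_{m,j+1}$ directly instead of splitting into $H_n$ and $a_{n,m}$. The $n-D$ zeros of $a_{n,m}$ on $\Delta_m$ are harmless (they lie on the boundary of the domain); the $D$ zeros of $a_{n,m}$ in $\C\setminus\Delta_m$ are neutralized by multiplying by the finite Blaschke product $\psi_n(z)=\prod_{k=1}^D\varphi_{x_{n,k}}(z)$, whose $n$-th root tends to $1$ on $\mathcal K$ since the $x_{n,k}$ stay uniformly away from $\mathcal K$. One then divides by $\varphi_\infty^n$ and by the Blaschke product $\varphi_{n,j}=\prod_k\varphi_{\zeta_{n,j,k}}$ built on the zeros of $\widetilde w_{n,j}$, and applies the maximum principle on the level curve $\gamma_\rho$ rather than on $\partial\Delta_m$. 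The decisive point is that on $\gamma_\rho$ one has $\bigl|(a_{n,j}/a_{n,m})-\widehat s_{m,j+1}\bigr|\le 1$ for all large $n$, simply because Theorem~\ref{Th_conv_unif} already gives uniform convergence there; this single estimate replaces both your $M_n^{1/n}\to 1$ and your $n$-th root asymptotic for $a_{n,m}$. On $\mathcal K$ the factor $|\varphi_{n,j}|$ contributes $\delta(\mathcal K)^{\deg\widetilde w_{n,j}}$, and the leftover $\rho^{-n}\kappa_\rho^{-\deg\widetilde w_{n,j}}$ is disposed of by letting $\rho\to 1$ at the end.
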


\begin{proof}
Fix $\mathcal{K} \subset \overline{\C} \setminus (\Delta_m \cup \{z: T(z)=0\})$. According to Theorem \ref{Th_conv_unif}, for all sufficiently large $n > N$ the polynomials $a_{n,m}$ have exactly $D$ zeros in $\C \setminus \Delta_m$ and they lie at a positive distance from $\mathcal{K}$ (independent of $n > D$. In the sequel we only consider such $n$'s.

Let $q_{n,m} = \prod_1^D (z - x_{n,k})$ be the monic polynomial of degree $D$ whose zeros are the roots of $a_{n,m}$ outside $\Delta_m$. From Theorem \ref{Th_conv_unif} we know that $\lim_{n\to \infty}q_{n,m} = T$.
Fix $j=1,\ldots,m$. Assume that $\widetilde{w}_{n,j}(z) = \prod_{k=1}^{\deg(\widetilde{w}_{n,j})}(z - \zeta_{n,j,k})$, where $\widetilde{w}_{n,j}$ is the polynomial introduced in the proof of Theorem \ref{Th_conv_Haus} (see \eqref{sign_changes}). Set
\[ \varphi_{n,j}(z) := \prod_{k=1}^{\deg(\widetilde{w}_{n,j})} \varphi_{\zeta_{n,j,k}}(z), \qquad \psi_n(z) := \prod_{k=1}^D \varphi_{x_{n,k}}(z).
\]

From \eqref{sign_changes} it follows that
\[ \psi_n \frac{(a_{n,j}/a_{n,m}) - \widehat{s}_{m,j+1}}{\varphi_\infty^n\varphi_{n,j}} \in\mathcal{H}(\overline{\C} \diff\Delta_m).
\]
Take $\rho$ sufficiently close to $1$ so that $\mathcal{K}$ lies in the unbounded connected component of the complement of $\gamma_\rho$. On $\gamma_\rho$, for all sufficiently large $n > N_1 \geq N$, we have
\begin{equation} \label{rate} \left\|\psi_n \frac{(a_{n,j}/a_{n,m}) - \widehat{s}_{m,j+1}}{\varphi_\infty^n\varphi_{n,j}}\right\|_{\gamma_\rho} \leq  \rho^{-n}\kappa_\rho^{-\deg{\widetilde{w}_{n,j}}},
\end{equation}
Indeed, $|\psi_{n}(z)| \leq 1$ for all $z \in \overline{\C} \setminus \Delta_m$, $\varphi_{\zeta_{n,j,k}}(z) \geq \kappa_\rho$ for all $\zeta_{n,j,k} \in \Delta_{m-1}$, and for all sufficiently large $n \geq N_2 \geq N_1$, $\|(a_{n,j}/a_{n,m}) - \widehat{s}_{m,j+1}\|_{\gamma_\rho} \leq 1$ since by Theorem \ref{Th_conv_unif} the function under the norm sign converges to zero on $\gamma_\rho$.

Using the maximum principle, from \eqref{rate} it follows that for all $z \in \mathcal{K}$
\begin{equation} \label{rate2}
\left|\frac{a_{n,j}(z)}{a_{n,m}(z)} - \widehat{s}_{m,j+1}(z)\right| \leq \frac{|\varphi_{n,j}(z)}{|\psi_n(z)|}\frac{\varphi_\infty^n(z)}{\rho^n\kappa^{\deg(\widetilde{w}_{n,j})}_\rho} \leq \frac{\|\varphi_\infty\|^n_{\mathcal{K}}}{|\psi_n(z)|\rho^n}\left(\frac{\delta(\mathcal{K})}{\kappa_\rho}\right)^{\deg(\widetilde{w}_{n,j})}
\end{equation}
Since the points $x_{n,1},\ldots,x_{n,D}$ remain bounded away from $\mathcal{K}$ independent of $n$, we obtain that
\[ \inf_{n > N_3} \{|\psi_n(z)|: z \in \mathcal{K}\} \geq C >0,
\]
where $N_3 \geq N_2$ is sufficiently large. On the other hand, recall tnat $n-2D-m+j \leq \deg(\widetilde{w}_{n,j}) \leq n$; consequently, using
\eqref{rate2}, we obtain
\[\limsup_n \left\|\frac{a_{n,j}}{a_{n,m}} - \widehat{s}_{m,j+1}\right\|_{\mathcal{K}}^{1/n} \leq  \frac{\delta(\mathcal{K})\|\varphi_\infty\|_{\mathcal{K}}}{\rho^n\kappa_\rho}.
\]
From here we get \eqref{rate3} since $\lim_{\rho \to 1} \kappa_\rho = 1$.

The proof of \eqref{polos*} is basically the same and is left to the reader.
\end{proof}

We wish to point out that if $\Delta_{m}$ is unbounded but $\Delta_{m-1}$ is bounded then it is also possible to prove convergence with geometric rate modifying slightly the arguments. Of course, the estimate of the rate of convergence will differ from the one above.

\begin{corollary} \label{cor5}
Under the hypothesis of Theorem \ref{Th_conv_unif} if we assume additionally that $\Delta_m$ is bounded then
\[ \limsup_{n\to \infty}\left\|\frac{\mathcal{A}_{n,j}}{a_{n,m}}\right\|_\mathcal{K}^{1/n} \leq \delta(\mathcal{K})\|\varphi_\infty\|_{\mathcal{K}}, \qquad j=0,\ldots,m-1,
\]
for every compact $\mathcal{K} \subset \overline{\C} \setminus (\Delta_{j+1} \cup \Delta_m \cup \{z: T(z) =0\})$.
\end{corollary}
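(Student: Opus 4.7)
The plan is to reduce the statement to Corollary \ref{cor:4} via the two algebraic identities that were exactly what made Corollary \ref{conforms} work. From the formula in \cite[Lemma 2.9]{ulises_lago_2}, one has
\[ (-1)^j\widehat{s}_{m,j+1} + \sum_{k=j+1}^{m-1}(-1)^k\widehat{s}_{m,k+1}\widehat{s}_{j+1,k} + (-1)^m\widehat{s}_{j+1,m} \equiv 0, \qquad j=1,\ldots,m-1, \]
and, using additionally the definition of $f$,
\[ f + \sum_{k=1}^{m-1}(-1)^k\widehat{s}_{m,k+1}(\widehat{s}_{1,k}+r_k) + (-1)^m(\widehat{s}_{1,m}+r_m) \equiv 0. \]

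For $j=1,\ldots,m-1$, I would divide the definition of $\mathcal{A}_{n,j}$ by $a_{n,m}$ and subtract the first identity. The $k=m$ summand (containing $a_{n,m}\widehat{s}_{j+1,m}$) cancels against the $(-1)^m\widehat{s}_{j+1,m}$ term, and the remaining contributions regroup as
\[ \frac{\mathcal{A}_{n,j}}{a_{n,m}} = (-1)^j\left(\frac{a_{n,j}}{a_{n,m}} - \widehat{s}_{m,j+1}\right) + \sum_{k=j+1}^{m-1}(-1)^k\left(\frac{a_{n,k}}{a_{n,m}} - \widehat{s}_{m,k+1}\right)\widehat{s}_{j+1,k}. \]
Because $\widehat{s}_{j+1,k}\in\mathcal{H}(\C\setminus\Delta_{j+1})$ and $\mathcal{K}\subset\overline{\C}\setminus(\Delta_{j+1}\cup\Delta_m\cup\{T=0\})$, every coefficient $\widehat{s}_{j+1,k}$ is uniformly bounded on $\mathcal{K}$. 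Applying \eqref{rate3} of Corollary \ref{cor:4} to each of the finitely many differences on the right-hand side and passing to $n$-th roots delivers the stated bound.

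For $j=0$ the argument runs in parallel: I would divide the definition of $\mathcal{A}_{n,0}$ by $a_{n,m}$, subtract the second identity, and arrive at
\[ \frac{\mathcal{A}_{n,0}}{a_{n,m}} = \left(\frac{a_{n,0}}{a_{n,m}} - f\right) + \sum_{k=1}^{m-1}(-1)^k\left(\frac{a_{n,k}}{a_{n,m}} - \widehat{s}_{m,k+1}\right)(\widehat{s}_{1,k}+r_k). \]
On $\mathcal{K}\subset\overline{\C}\setminus(\Delta_1\cup\Delta_m\cup\{T=0\})$ each $\widehat{s}_{1,k}$ is bounded (holomorphic off $\Delta_1$) and each $r_k$ is bounded (since $\mathcal{K}$ avoids the zeros of $T$). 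Invoking \eqref{rate3} for the summands with $k\geq 1$ and \eqref{polos*} for the leading difference completes the proof.

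The whole argument is essentially bookkeeping: the only mildly delicate point is to check that each bounded factor is continuous on the indicated compact set, and the exclusion of $\Delta_{j+1}$ (respectively $\Delta_1$) together with the zeros of $T$ in the hypothesis is engineered precisely for this. Consequently I do not anticipate any genuine obstacle, and the corollary is a direct consequence of Corollary \ref{cor:4} together with the cancellation identity underlying Corollary \ref{conforms}.
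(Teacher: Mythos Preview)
Your proposal is correct and follows essentially the same approach as the paper: decompose $\mathcal{A}_{n,j}/a_{n,m}$ using the cancellation identity from \cite[Lemma 2.9]{ulises_lago_2} (and the definition of $f$ for $j=0$) to express it as a finite sum of the differences controlled by Corollary~\ref{cor:4}, with bounded holomorphic coefficients on $\mathcal{K}$. In fact you give more detail than the paper, which writes out only the case $j\geq 1$ and leaves $j=0$ to the reader.
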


\begin{proof} Indeed, for $j=1,\ldots,m-1$,
\[\frac{\mathcal{A}_{n,j}}{a_{n,m}} =    (-1)^j \frac{a_{n,j}}{a_{n,m}} + \sum_{k=j+1}^m (-1)^k \frac{a_{n,k}}{a_{n,m}}\widehat{s}_{j+1,k}   =
\]
\[
(-1)^j \left(\frac{a_{n,j}}{a_{n,m}} - \widehat{s}_{m,j+1}\right)+ \sum_{k=j+1}^{m-1} (-1)^k \left(\frac{a_{n,k}}{a_{n,m}} - \widehat{s}_{m,k+1}\right)\widehat{s}_{j+1,k}
 \]
 because, according to \cite[Lemma 2.9]{ulises_lago_2}
 \[ (-1)^j \widehat{s}_{m,j+1} +  \sum_{k=j+1}^{m-1} (-1)^k \widehat{s}_{m.k+1}\widehat{s}_{j+1,k} + (-1)^m \widehat{s}_{j+1,m} \equiv 0
 \]
 for all $z \in \C \setminus (\Delta_{j+1} \cup \Delta_m)$. Now it remains to use \eqref{rate3} and trivial estimates. The proof for $j=0$ is similar and it is left to the reader.
\end{proof}

When the measures generating the Nikishin system are regular in the sense of Stahl and Totik, see \cite{Stahl_Totik}, then more precise estimates of the rate of convergence may be given. This will be the subject of a forthcoming paper.

\end{document}